\pdfoutput=1

\documentclass[11pt]{amsart}
\usepackage{amsmath}
\usepackage{amsthm}
\usepackage{amsfonts}
\usepackage{amssymb}
\usepackage{appendix}

\usepackage[parfill]{parskip}    

\usepackage{geometry}                
\geometry{letterpaper}                   
\usepackage{graphicx}
\usepackage{epstopdf}

\usepackage[all,cmtip]{xy}

\DeclareGraphicsRule{.tif}{png}{.png}{`convert #1 `dirname #1`/`basename #1 .tif`.png}

\newtheorem{theorem}{Theorem}
\newtheorem{definition}[theorem]{Definition}
\newtheorem{lemma}[theorem]{Lemma}
\newtheorem{corollary}[theorem]{Corollary}
\newtheorem{proposition}[theorem]{Proposition}

\newcommand{\x}{\mathbf x}

\newcommand{\xix}{\xi}
\newcommand{\xiy}{\eta}
\newcommand{\xiz}{\zeta}

\newcommand{\ba}{\begin{align} }
\newcommand{\bs}{\begin{split} }
\newcommand{\es}{\end{split} }
\newcommand{\ea}{\end{align} }

\newcommand{\UU}[1]{\mathcal{U}^{(#1)} }
\newcommand{\PU}[1]{\mathcal{R}^{(#1)} }
\newcommand{\OUU}[1]{\overline{\mathcal{U}}^{(#1)} }
\newcommand{\XU}[2]{\mathcal{X}^{(#1)}_{#2}}

\newcommand{\pii}{i} 
\newcommand{\pij}{j} 

\newcommand{\HH}{\mathbf H}
\newcommand{\cH}{\mathcal{H}}
\newcommand{\cV}{\mathcal{V}}

\newcommand{\cS}{\mathcal{S}}
\newcommand{\cT}{\mathcal{T}}
\newcommand{\PP}[1]{{P}^{#1}}
\newcommand{\tPP}[1]{\tilde{{P}}^{#1}}
\newcommand{\RR}{\mathbb R}
\newcommand{\rp}{ {\hat{K} } }
\newcommand{\ip}{  {K_{\infty} }  }
\providecommand{\abs}[1]{\left\lvert#1\right\rvert}
\providecommand{\norm}[1]{\lVert#1\rVert}

\providecommand{\col}[3]{\begin{pmatrix} #1 \\ #2 \\ #3 \end{pmatrix} }

\providecommand{\pd}[2]{\frac{\partial #1}{\partial #2}}

\providecommand{\set}[1]{\left\{ #1 \right \}}
\providecommand{\inner}[1]{\left( #1 \right )}
\newcommand{\inc}[1]{\Upsilon_{#1}}
\newcommand{\prox}[1]{\mathcal{I}_{#1}}
\newcommand{\QS}{S}

\providecommand{\w}[1]{w^{(#1)}}

\DeclareMathOperator{\curl}{{curl}}
\DeclareMathOperator{\spnn}{{span}}
\DeclareMathOperator{\grad}{{grad}}
\DeclareMathOperator{\divv}{div}

\DeclareMathOperator{\trace}{tr}

\providecommand{\spn}[1]{\spnn \left\{ #1 \right\} }

\title{Numerical integration for high order pyramidal finite elements}
\author{Nilima Nigam}
\address{Department of Mathematics, Simon Fraser University, Burnaby, V5A 1S6}
\email{nigam@math.sfu.ca}
\author{Joel Phillips}
\address{Department of Mathematics and Statistics, McGill University, Montreal, H3A 2K6}
\email{phillips@math.mcgill.ca}
\thanks{The work of NN was supported by the Natural Sciences and Engineering Research Council of Canada, and the Canada Research Chairs program. JP was supported by a Natural Sciences and Engineering Research Council graduate fellowship.}
\date{\today}                                           

\begin{document}

\begin{abstract}
We examine the effect of numerical integration on the convergence of high order pyramidal finite element methods.  Rational functions are indispensable to the construction of pyramidal interpolants so the conventional treatment of numerical integration, which requires that the finite element approximation space is piecewise polynomial, cannot be applied.  We develop an analysis that allows the finite element approximation space to include rational functions and show that despite this complication, conventional rules of thumb can still  be used to select appropriate quadrature methods on pyramids.  Along the way, we present a new family of high order pyramidal finite elements for each of the spaces of the de Rham complex.
\end{abstract}
\maketitle


\section{Introduction}
Pyramidal finite elements are used in applications as ``glue'' in heterogeneous meshes containing hexahedra, tetrahedra and prisms.   Various constructions of high order pyramidal elements have been proposed \cite{zgainski:edge, zgainski:family, graglia:highorderpyramid, hiptmair:pyramid, zaglmayr:thesis, phillips:pyramid}.  A useful summary of the approaches taken for $H^1$-conforming elements is given by Bergot et al. \cite{bergotcohendurufle::pyramid}, who also provide some motivating numerical results for the performance of methods based on meshes containing pyramidal elements.  

Our aim here is to study the effect of numerical integration on arbitrarily high order pyramidal finite elements that approximate each of the spaces of the de Rham complex.  If they are to be used to implement stable mixed methods, such elements should also satisfy a commuting diagram property. One such set of elements was constructed by Zaglmayr based on the theory of local exact sequences, \cite{zaglmayr:thesis}, and is summarised in \cite{demkowicz2007computing}.  Another construction was given by the authors in \cite{phillips:pyramid}, and forms the starting point for this work.

A prototypical (linear) problem is 
\begin{align}\label{protobilinear}
\text{For $a:V \times V \rightarrow \RR$ and $f \in V'$, find $u \in V$ such that:}\quad a(u,v) = f(v) \quad \forall\: v \in V,
\end{align}
where $V$ is a space of functions on a domain, $\Omega \subset \RR^n$.  One way of obtaining a numerical approximation to $u$ is to replace $V$ with some finite dimensional approximation, $V_h$ constructed using finite elements on a mesh whose size is controlled by $h$.  A typical result is that the approximate solution converges to the true solution at some rate, $O(h^k)$ where the order of convergence, $k$, depends on the degree of largest complete space of polynomials used in the finite element approximation space.  

In general, the bilinear form, $a(\cdot,\cdot)$ and the right hand side $f(\cdot)$ are evaluated using numerical integration rules.  These are additional sources of errors in the approximate solution.  The theory that describes these errors is now classical and can be found, for example, in \cite{ciarlet:fem, brenner:mtfem}; its objectives, nicely summed up in \cite{ciarlet:fem}, are

\begin{quote}``to give sufficient conditions on the quadrature scheme which insure that the effect of the numerical integration does decrease [the] order of convergence.''
\end{quote}

In this paper we will show that the quadratures described as conical product formulae by Stroud \cite{stroud:acmiac} satisfy the above property for our pyramidal elements.  The main challenge arises from the fact that the classical theory is only applicable to finite elements with approximation spaces consisting purely of polynomials, but pyramidal elements necessarily include functions other than polynomials, specifically rational functions (see \cite{phillips:pyramid} or \cite{wieners:conforming}).  In contrast to the claim in \cite{bergotcohendurufle::pyramid}, we show that the importance of these functions in constructing interpolants means that it is not possible to achieve global estimates of the consistency error by summing element-wise estimates that only deal with polynomials.    

Section \ref{sectiondefinitions} introduces a framework that will allow us to unify our analysis for discrete approximations to each of the spaces of the de Rham complex.  We also recall the definitions of the approximation spaces for the elements in \cite{phillips:pyramid} and the quadrature rules given in \cite{stroud:acmiac}.  In section \ref{sectionstroud} we show that the conical product formulae are exact for products of all pairs of functions from the approximation spaces, including the non-polynomials.  The intuition from the classical theory would be that this is all that is required.  However, in section \ref{sectionexisting} we show that the reasoning behind this intuition is insufficient when functions other than polynomials are present.  To overcome this, we derive a generalisation of the standard Bramble-Hilbert argument.  In section \ref{newapproxspaces} we present new families of approximation spaces that allow us to take advantage of this generalisation (and which can be used to construct new pyramidal finite elements in their own right).  Finally, we pull everything together in section \ref{sectionpyramid} and show that Stroud's quadrature rules satisfy the desired property for both the new and original families of elements.

\section{Definitions}\label{sectiondefinitions}
\subsection{Differential forms}
In common with Arnold et al. \cite{arnold:acta} we find that it is natural to use tools from differential geometry when discussing approximations to the de Rham complex.  

Let $\Omega \subset \RR^n$ and define $\Lambda^{(s)}(\Omega)$ as the space of differential $s$-forms on $\Omega$.   A point, $x \in \Omega$, has coordinates $(x^i)_{i=0\dots n}$ and a given $u \in \Lambda^{(s)}(\Omega)$ can be expressed in terms of its components,  $u = \sum_\alpha u_\alpha dx^{\alpha_1} \wedge \cdots \wedge dx^{\alpha_s}$ where each $u_\alpha \in C^{\infty}(\Omega)$ and the multi-indices, $\alpha = \alpha_1\cdots\alpha_s$ run over the set, $\inc{s}$, of all increasing sequences, $\{1... s\} \rightarrow \{1...n\}$.

Define $\Theta^{(s)}(\Omega)$ to be the space of all (covariant) tensors, $A : \Lambda^{(s)}(\Omega) \times \Lambda^{(s)}(\Omega) \rightarrow C^\infty(\Omega)$ that can be defined in terms of the pointwise representation, 
\begin{align}\label{tensorrepresentation}
A(u,v)(x) := A^{\alpha\beta}(x) u_\alpha(x) v_\beta(x) \quad \forall u,v \in \Lambda^{(s)}(\Omega),
\end{align}
where we are using the Einstein summation convention, $A^{\alpha\beta} u_\alpha v_\beta:= \sum_{\alpha, \beta \in \inc{s}} A^{\alpha\beta} u_\alpha v_\beta$.  We will insist that $A^{\alpha\beta}$ is anti-symmetric in the first $s$ and second $s$ components, which makes the representation unique.

A tensor, $A \in \Theta^{(s)}(\Omega)$ induces a bilinear form on $\Lambda^{(s)}(\Omega)$:
\begin{align*}
\inner{u,v}_{A,\Omega} =\int_{\Omega} A^{\alpha\beta}(x) u_\alpha(x) v_\beta(x) d\x,  
\end{align*}
where $d\x =  dx^{\alpha_1} \wedge \cdots \wedge dx^{\alpha_n}$.

Let $\mathcal{T}$ be a partition of $\Omega$ where every $K \in \mathcal{T}$ is the image of a simple reference domain, $\rp \subset \RR^n$, under a diffeomorphism $\phi_K: \rp \rightarrow K$.  On each $K$, the \emph{reference coordinates}, $\hat x = (x^{\hat i})_{\hat i=0\dots n}$ of any point $x \in K$, are given by $\hat x = \phi_K^{-1}(x)$.  Given $u \in \Lambda^{(s)}(K)$, the reference coordinate system induces a new set of components $u_{\hat \alpha}$.  Differential forms are contravariant, so the components transform as:
\begin{align}\label{formtransform}
u_{\hat{\alpha}} = \sum_{\alpha \in \inc{s}} \pd{x^{\alpha_1}}{x^{\hat{\alpha}_1}} \cdots  \pd{x^{\alpha_s}}{{x}^{\hat{\alpha}_s}} u_\alpha.
\end{align}
The components of a covariant tensor, $A \in \Theta^{(s)}(\Omega)$ transform as:
\begin{align}\label{tensortransform}
A^{\hat{\alpha}\hat{\beta}} = \sum_{\alpha, \beta \in \inc{s}} \pd{{x}^{\hat{\alpha}_1}}{x^{\alpha_1}} \cdots  \pd{{x}^{\hat{\alpha}_s}}{x^{\alpha_s}}\pd{{x}^{\hat{\beta}_1}}{x^{\beta_1}} \cdots  \pd{{x}^{\hat{\beta}_s}}{x^{\beta_s}} A^{\alpha\beta}.
\end{align}
Note that $\langle u(x),v(x) \rangle_{A(x)} =  A^{\alpha \beta}(x) u_{\alpha}(x)v_\beta(x) = A^{\hat \alpha \hat \beta}(\hat x) u_{\hat \alpha}(\hat x) v_{\hat \beta}(\hat x)$ is just a 0-form and we have the change of variables formula on each element, $K$: 
\begin{align}\label{bformchangevariables}
(u,v)_{A,K} = \int_{K} A^{\alpha\beta} u_\alpha v_\beta d\x = \int_{\hat K} A^{\hat \alpha\hat \beta} u_{\hat \alpha} v_{\hat \beta} \abs{D \phi_K} d \hat \x,
\end{align}
where $D\phi_K$ is the Jacobian of $\phi_K$ and $d\hat \x = dx^{\hat \alpha_1} \wedge \cdots \wedge dx^{\hat \alpha_n}$.

When $n=2$ and $n=3$, it is conventional to think of differential forms in terms of proxy fields.  The spaces $\Lambda^{(0)}(\Omega)$ and $\Lambda^{(n)}(\Omega)$ are always isomorphic to the scalar field, $C^\infty(\Omega)$.  When $n=3$, the spaces  $\Lambda^{(1)}(\Omega)$ and $\Lambda^{(2)}(\Omega)$ are isomorphic to the vector field, $(C^\infty(\Omega))^3$.  For $u \in \Lambda^{(s)}(\Omega)$, we denote the components of the proxy field as $u_i$ for $i \in \prox{s} = \left\{1, \dots, \binom{3}{s} \right\}$.  The isomorphisms for the vector fields are given by
\begin{align*}
u \in \Lambda^{(1)}(\Omega) \mapsto\col{u_{1}}{u_{2}}{u_{3}},  \quad 
u \in \Lambda^{(2)}(\Omega) \mapsto\col{u_{23}}{-u_{13}}{u_{12}}. 
\end{align*}
With these identifications, the exterior derivatives, $d:\Lambda^{(s)}(\Omega) \rightarrow \Lambda^{(s+1)}(\Omega)$ for $s=0,1,2$ become the familiar $\grad$, $\curl$ and $\divv$. 

As with the differential forms, we will use symbols on the subscripts (and superscripts) of proxies to indicate the coordinate system that is being used to determine the components of the proxy fields.  Given some $u \in \Lambda^{(s)}(\Omega)$, $u_{i'}$ is the $i$th component of its proxy in the coordinate system $x' = \left(x^{1'}, x^{2'}, x^{3'}\right)$.  We will also write $u' = (u_{i'})_{i \in \prox{s}}$ to indicate all the components of the vector (or scalar) field.

For a coordinate change, $x = \phi(x')$, the weights appearing in the contravariant and covariant transformation rules, \eqref{formtransform} and \eqref{tensortransform}, can be written as the entries of a $\binom{3}{s} \times \binom{3}{s}$ matrix, $\w{s}_{\phi}$.  We choose to let $\w{s}_{\phi}$ to be the weight in the covariant transformation so that, for $u \in \Lambda^{(s)}(\Omega)$
\begin{align}\label{proxytransform}
\sum_{\pii' \in \prox{s}} \left(\w{s}_{\phi}\right)_{\pii, \pii'} u_{\pii'} = u_\pii \quad \forall\pii \in \prox{s}. 
\end{align}
The weights can be calculated in terms of the Jacobian, $D\phi$:
\begin{align}\label{pullbackweights}
\w{0}_\phi = 1,\qquad \w{1}_\phi = D{\phi ^{-1}}^t, \qquad \w{2}_\phi = \abs{D \phi ^{-1}}D \phi, \qquad \w{3}_\phi = \abs{D \phi ^{-1}}. 
\end{align}

The exterior derivative is an intrinsic property of any manifold.  This means that it is independent of coordinates; equivalently, the exterior derivative commutes with coordinate transformation.

The use of a reference coordinate system is a familiar concept.  Shape functions for finite elements on simplices are often defined in terms of barycentric coordinates.  Using the reference coordinate system to examine a shape function thought of as (a proxy to) a differential form is equivalent to mapping it to a reference element using pullback mapping.  

\subsection{Sobolev spaces}
Let the Sobolev semi-norms $\abs{\cdot}_{W^{k,p}(\Omega)}$ and $\abs{\cdot}_{H^k(\Omega)} = \abs{\cdot}_{W^{k,2}(\Omega)}$ have their standard meanings.  Define semi-norms and norms for any $u \in \Lambda^{(s)}(\overline{\Omega})$ as
\begin{align*}
\abs{u}_{k,\Omega} := \sum_{i \in \prox{s}} \abs{u_i}_{H^k(\Omega)}, \qquad \qquad
\norm{u}_{k,\Omega} := \sum_{r=0}^{k} \abs{u}_{r,\Omega}.
\end{align*} 
The Sobolev spaces, $H^r\Lambda^{(s)}(\Omega)$ and $\mathcal{H}^{(s),r}(\Omega)$ are then defined as the completion of  $\Lambda^{(s)}(\overline{\Omega})$ in the norms $\norm{u}_{r,\Omega}$ and $\norm{u}_{\mathcal{H}^{(s),r}(\Omega)} = \norm{u}_{r,\Omega} + \norm{du}_{r,\Omega}$ respectively.  

As a short-hand, we will write $\mathcal{H}^{(s)}(\Omega) = \mathcal{H}^{(s),0}(\Omega)$.  The spaces of proxy fields corresponding to $\cH^{(0)}(\Omega)$, $\cH^{(1)}(\Omega)$, $\cH^{(2)}(\Omega)$ and $\cH^{(3)}(\Omega)$ are the familiar $H^1(\Omega)$, $H(\curl, \Omega)$, $H(\divv,\Omega)$ and $L^2(\Omega)$.  

Note\footnote{When $r=0$, this is the observation that $H^1(\Omega)^3 \subset H(\curl, \Omega)$ and $H^1(\Omega)^3 \subset H(\divv, \Omega)$.}  that $H^{r+1}\Lambda^{(s)}(\Omega) \subseteq \cH^{(s),r}(\Omega)$ and in particular $\cH^{(0),r}(\Omega) = H^{r+1}\Lambda^{(0)}(\Omega) \cong H^{r+1}(\Omega)$, and $\cH^{(n),r}(\Omega) = H^r\Lambda^{(n)}(\Omega) \cong H^r(\Omega)$.  

For $A \in \Theta^{(s)}(\overline{\Omega})$, we similarly define 
\begin{align*}
\abs{A}_{k,\infty,\Omega} := \sum_{i, j \in \prox{s}} \abs{A^{ij}}_{W^{k,\infty}(\Omega)},\qquad \qquad
\norm{A}_{k,\infty,\Omega} := \sum_{r=0}^k \abs{A}_{r,\infty,\Omega}
\end{align*}
 and define $W^{r,\infty}\Theta^{(s)}(\Omega)$ to be the completion of $\Theta^{(s)}(\overline{\Omega})$ in $\norm{\cdot}_{r,\infty,\Omega}$.

For a given $K$ and $u \in \Lambda^{(s)}(\overline K)$ and $A \in \Theta^{(s)}(\overline K)$, define the \emph{reference semi-norms}.\footnote{These are the norms induced by the metric in which the reference coordinates are orthonormal.  They are used in the scaling argument in section \ref{sectionpyramid}.}
\begin{align*}
\abs{u}_{k,\hat K} := \sum_{\hat i \in \prox{s}} \abs{u_{\hat i} }_{H^k(\hat K)}, \qquad
\abs{A}_{k,\infty,\hat K} := \sum_{\hat i, \hat j \in \prox{s}} \abs{A^{\hat i \hat j}}_{W^{k,\infty}(\hat K)}. 
\end{align*} 
Suppose that $\left(\mathcal{T}_h\right)_{h >0}$ is a family of shape-regular partitions of $\Omega$, where every $K \in \mathcal{T}_h$ is affine equivalent to $\hat{K}$ and each $\phi_K$ satisfies
\begin{align}\label{shaperegular}
\norm{D \phi_K} \le h\quad \text{and}\quad \norm{D \phi_K^{-1}} \le \frac{\rho}{h}
\end{align}
for some $\rho \ge 1$.  For any $u \in \cH^{(s),k}(K)$ and $A \in W^{k,\infty}\Theta^{(s)}(K)$, we have the inequalities
\begin{align}
&\frac{1}{C\rho^{k+s}}\frac{h^{k+s}}{\abs{D \phi_K}^{1/2}} \abs{u}_{k,K} \le \abs{u}_{k,\hat K} \le C\frac{h^{k+s}}{ \abs{D \phi_K}^{1/2}} \abs{u}_{k, K} \label{hscalingu} \\
&\frac{1}{C\rho^{k}} h^{k-2s}\abs{A}_{k,\infty,K} \le \abs{A}_{k,\infty,\hat K} \le C\rho^{2s}h^{k-2s} \abs{A}_{k,\infty, K} \label{hscalingA}
\end{align}
for some constant $C = C(k,n)$ which is independent of $h$.  These can be deduced from the standard scaling argument for Sobolev semi-norms of functions (see, for example, \cite{ciarlet:fem}) combined with the transformation rules, \eqref{formtransform} and \eqref{tensortransform} and the observation that \eqref{shaperegular} implies that $\pd{x^{\alpha_i}}{\hat x^{\hat \alpha_j}} \le h$ and $\pd{\hat x^{\hat \alpha_j}}{x^{\alpha_i}} \le \frac{\rho}{h}$ for all $i,j$.

\subsection{Pyramidal elements}
From now on we will assume $\Omega \subset \RR^3$.  To contain the proliferation of indices, we will use the notation $(\xix,\xiy,\xiz)$ for  the reference coordinates $(x^{\hat 1}, x^{\hat 2}, x^{\hat 3})$.  The reference domain is defined as the pyramid: 
\begin{align*}
\rp = \{ (\xix,\xiy,\xiz) \:|\: 0 \le \xiz \le 1, 0 \le \xix,\xiy \le \xiz\}.
\end{align*}
We have chosen to restrict our analysis to affine maps $\phi_K$ so each element of the mesh, $K \in \mathcal{T}_h$, will be a parallelogram-based pyramid.  We will refer to a general such $K$, as an \emph{affine pyramid}.

As in \cite{phillips:pyramid} we will also use the \emph{infinite pyramid}, 
\begin{align*}
\ip = \{ (x,y,z) \: | \: 0 \le x,y \le 1, 0 \le z \le \infty \},
\end{align*}
as a tool to help analyse and define the pyramidal elements.  
The finite and infinite pyramids may be identified using the projective mapping,
\begin{align} \label{projmap}
&\phi: \ip \rightarrow \rp \\
&\phi: (x,y,z) \mapsto \left(\frac{x}{1+z},\frac{y}{1+z},\frac{z}{1+z}\right),
\end{align}
which can be thought of as a change of coordinates and so, for any element, $K$, induces the \emph{infinite pyramid coordinate system}\footnote{so-called because $z \rightarrow \infty$ as $(\xix,\xiy,\xiz) \rightarrow (0,0,1)$ at the top of the pyramid.} defined as $\tilde x = \phi^{-1} \hat x$.   We shall usually write $\tilde x = (x,y,z) $.

The corresponding weights in the change of coordinates transformation rule can be calculated explicitly:
\begin{subequations}
\begin{align}
\w{0}_\phi &= 1 \label{weight0}, \\
\w{1}_\phi &= D{\phi^{-1}}^t = (1+z)\col{1 & 0 & 0}{0& 1 & 0}{x &y &1+z} \label{jacinverse}, \\
\w{2}_\phi &= \abs{D\phi^{-1}}D\phi = (1+z)^2\col{1+z & 0 & -x}{0 &1+z & -y}{0&0&1} \label{jac}, \\
\w{3}_\phi &= \abs{D\phi^{-1}} = (1+z)^4 \label{weight3}. 
\end{align}
\end{subequations}
The approximation spaces for the finite elements presented in \cite{phillips:pyramid} are defined on the infinite pyramid using \emph{$k$-weighted} tensor product polynomials, $Q_k^{l,m,n}[x,y,z]$, which are tensor product spaces of polynomials, $Q^{l,m,n}[x,y,z]$, multiplied by a weight $\dfrac{1}{(1+z)^k}$.  That is, $Q_k^{l,m,n}$ is spanned by the set\footnote{If $l$, $m$ or $n$ is negative then $Q^{l,m,n} = \set{0}$}
\begin{align*}
\left\{\dfrac{x^ay^bz^c}{(1+z)^k},\: 0\le a \le l, 0 \le b \le m, 0 \le c \le n   \right\}.
\end{align*}  
For each family of elements on the infinite pyramid, an \emph{underlying} approximation space is defined for each order, $k \ge 1$.
\begin{itemize}
\item $H^1$-conforming element underlying space:
\begin{subequations}
\begin{align}\label{h1space}
\OUU{0}_k &= Q_k^{k,k,k-1} \oplus \spn{\frac{z^k}{(1+z)^k}}.
\end{align}
\item $\HH(\curl)$-conforming element underlying space:
\begin{align}
\begin{split}\label{hcurlspace}
\OUU{1}_k &= Q_{k+1}^{k-1,k,k-1} \times Q_{k+1}^{k,k-1,k-1} \times Q_{k+1}^{k,k,k-2} \\ 
&\oplus \left\{\frac{z^{k-1}}{(1+z)^{k+1}} \col{r_x z}{r_y z}{-r},\quad r \in Q^{k,k}[x,y]\right\}. 
\end{split}
\end{align}
\item $\HH(\divv)$-conforming element space:
\begin{align}\label{hdivspace}
\begin{split}
\OUU{2}_k &= Q_{k+2}^{k,k-1,k-2} \times Q_{k+2}^{k-1,k,k-2} \times Q_{k+2}^{k-1,k-1,k-1}  \\
&\quad \oplus \frac{z^{k-1}}{(1+z)^{k+2}}\col{0}{2s}{s_y (1+z)} \oplus \frac{z^{k-1}}{(1+z)^{k+2}}\col{2t}{0}{t_x (1+z)}, \\ 
\end{split}
\end{align}
where $s(x,y) \in Q^{k-1,k}[x,y],\quad t(x,y) \in Q^{k,k-1}[x,y].$
\item $L^2$-conforming element underlying space:
\begin{align}\label{l2space}
\UU{3}_k &= Q_{k+3}^{k-1,k-1,k-1}.
\end{align}
\end{subequations}
\end{itemize}

For an element defined on a pyramid, $K$, the underlying approximation space, $\OUU{s}_k(K)$ is defined as the space containing all the $s$-forms whose components induced by the infinite pyramid coordinate system lie in $\OUU{s}_k$:\footnote{We are using coordinate transformations here, but in \cite{phillips:pyramid}, the underlying spaces are defined as the pullbacks, $\OUU{s}_k(\hat{K}) = \set{(\phi^{-1})^*v : v \in \OUU{s}_k}$ and $\OUU{s}_k(K) = \{\phi_K^*v : v \in \OUU{s}_k(\hat{K})\}$.}
\begin{align}\label{underlying}
\OUU{s}_k(K) = \set{u \in \Lambda^{(s)}(K) : \left(u_{\tilde i}\right)_{i \in \prox{s}} \in \OUU{s}_k}.
\end{align}

By inspection, it can be seen that the exterior derivative $d: \OUU{s}_k\rightarrow \OUU{s+1}_k$ is well defined, and so, since $d$ is independent of coordinates, the exterior derivative on the spaces on each element,
\begin{align}\label{exterior}
d: \OUU{s}_k(K) \rightarrow \OUU{s+1}_k(K)
\end{align}
is also well defined. 

A full explanation of these spaces is provided in \cite{phillips:pyramid}.  Some motivation may be seen from the following lemma.
\begin{lemma}\label{UUuniformity}
For a given $K$ and $s \in \{0,1,2,3\}$ let $u \in \OUU{s}_k(K)$.  Each component  $u_{\hat \pii}$ (where $\hat \pii \in \prox{s}$) of $u$ in the reference coordinate system satisfies
\begin{align}\label{wuQkkk}
u_{\hat \pii} \circ \phi \in Q_k^{k,k,k}.
\end{align} 
This means that
\begin{align} \label{uH1}
\OUU{s}_k(K) \subset \cH^{(s)}(K).
\end{align}
\end{lemma}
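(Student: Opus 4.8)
The plan is to reduce \eqref{wuQkkk} to a short list of elementary degree counts and then to deduce \eqref{uH1} from \eqref{wuQkkk} by a weighted change of variables.

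First I would unwind the definitions. By \eqref{underlying}, $u\in\OUU{s}_k(K)$ means exactly that the proxy components $\left(u_{\tilde\pii}\right)_{\pii\in\prox{s}}$ of $u$ in the infinite pyramid coordinate system are the explicit functions of $(x,y,z)$ listed in \eqref{h1space}--\eqref{l2space}. The reference components $u_{\hat\pii}$ are obtained from these through the covariant transformation rule \eqref{proxytransform} for the coordinate change $\hat x=\phi(\tilde x)$; evaluated at a point of $\ip$ it reads
\begin{align*}
u_{\hat\pii}\circ\phi \;=\; \sum_{\pij\in\prox{s}}\bigl(\w{s}_\phi\bigr)_{\pii,\pij}\,u_{\tilde\pij}\qquad\text{on }\ip,
\end{align*}
where $\w{s}_\phi$ is the explicit weight matrix recorded in \eqref{weight0}--\eqref{weight3}. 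Thus \eqref{wuQkkk} is equivalent to the claim that, for each $s$, the matrix $\w{s}_\phi$ sends the tuple of components of any element of $\OUU{s}_k$ into $\left(Q_k^{k,k,k}\right)^{\prox{s}}$.

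I would establish that claim by direct inspection, going through $s\in\{0,1,2,3\}$ and each summand of the direct-sum description of $\OUU{s}_k$ in turn; the mechanism is uniform. The polynomial part of $\OUU{s}_k$ has components of the form $p/(1+z)^{k+s}$, with $p$ a polynomial whose coordinatewise degree budget is exactly as prescribed in \eqref{h1space}--\eqref{l2space}, and from \eqref{jacinverse}--\eqref{weight3} every nonzero entry of $\w{s}_\phi$ is, up to a constant, $(1+z)^{s}$ times one of $1$, $x$, $y$, or $1+z$. Multiplying $p/(1+z)^{k+s}$ by such an entry yields $q/(1+z)^{k}$, where $q$ equals $p$ times a factor that raises the coordinatewise degree by at most $(1,0,0)$, $(0,1,0)$, or $(0,0,1)$; the budgets in \eqref{h1space}--\eqref{l2space} are arranged precisely so that, for the entry actually multiplying each component, $q$ still has coordinatewise degree $\le(k,k,k)$, i.e.\ $q/(1+z)^{k}\in Q_k^{k,k,k}$. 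The rational ``bubble'' summands of \eqref{hcurlspace} and \eqref{hdivspace} go the same way: the surplus powers of $1+z$ in $\w{s}_\phi$ absorb the extra $(1+z)^{-1}$ or $(1+z)^{-2}$ in those summands against the $k$-weight, while the $z^{k-1}$ factor together with the polynomial in $x,y$ of bidegree $\le(k,k)$ leaves exactly enough room. The case $s=0$ is immediate, since $\w{0}_\phi=1$ and $\OUU{0}_k\subseteq Q_k^{k,k,k}$ by inspection of \eqref{h1space}. I expect this bookkeeping -- tracking which entry of $\w{s}_\phi$ hits which component and checking that no degree budget is exceeded -- to be the main (indeed only) obstacle in the proof; it is routine but must be carried out for all four spaces.

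Finally, for \eqref{uH1}: the proxy realisation of $\cH^{(s)}(K)$ is $H^1(K)$, $H(\curl,K)$, $H(\divv,K)$, $L^2(K)$ for $s=0,1,2,3$, so $u\in\cH^{(s)}(K)$ iff every proxy component of $u$ and of $du$ lies in $L^2(K)$. By \eqref{exterior}, $du\in\OUU{s+1}_k(K)$, so by the part just proved the reference components of both $u$ and $du$ pull back under $\phi$ into $Q_k^{k,k,k}$. It therefore suffices to show that any $v$ with $v_{\hat\pii}\circ\phi=p/(1+z)^{k}$, $p$ of degree $\le k$ in $z$, satisfies $v_{\hat\pii}\in L^2(\rp)$; affine equivalence of $K$ and $\rp$ via $\phi_K$ then transfers this to $K$. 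Changing variables by $\hat x=\phi(\tilde x)$, whose Jacobian determinant is $(1+z)^{-4}$ by \eqref{weight3},
\begin{align*}
\int_{\rp}\abs{v_{\hat\pii}}^2\,d\hat\x \;=\; \int_{\ip}\frac{\abs{p(x,y,z)}^2}{(1+z)^{2k+4}}\,dx\,dy\,dz;
\end{align*}
on $\ip$ the variables $x,y$ lie in $[0,1]$ and there $\abs{p}^2\le C(1+z)^{2k}$, so the integrand is $\le C(1+z)^{-4}$, which is integrable over $z\in[0,\infty)$. Hence $v_{\hat\pii}\in L^2(\rp)$; applying this to the components of $u$ and of $du$ gives $u\in\cH^{(s)}(K)$.
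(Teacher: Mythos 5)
Your proposal is correct and follows essentially the same route as the paper: unwind \eqref{underlying}, apply the weight matrices \eqref{weight0}--\eqref{weight3} to the component spaces \eqref{h1space}--\eqref{l2space} and check the degree budgets case by case, then use \eqref{exterior} to handle $du$. The only (immaterial) difference is in the last step: the paper simply notes $Q_k^{k,k,k}\subset L^\infty(\ip)$ so each $u_{\hat\pii}$ is bounded on the finite-measure set $\hat K$, whereas you compute the $L^2$ integral explicitly over $\ip$ with the Jacobian $(1+z)^{-4}$ -- both rest on the same bound $\abs{p}\le C(1+z)^k$.
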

\begin{proof}
The relationship between the representations of $u$ in the reference and infinite pyramid coordinate systems is given by equation \eqref{proxytransform}: $\hat u \circ \phi = \w{s}_\phi \tilde u$, where the weights, $\w{s}_\phi$, are given by \eqref{weight0}-\eqref{weight3}.  To establish  \eqref{wuQkkk}, each $s \in \set{0,1,2,3}$ needs to be dealt with as a separate case.

When $s=0$, the weight, $\w{0}_\phi = 1$ and it is clear from \eqref{h1space} that $\OUU{0}_k \subset Q^{k,k,k}_k$.  When $s=1$, inspection of \eqref{hcurlspace} reveals that $\OUU{1}_k \subset Q_{k+1}^{k-1,k,k} \times Q_{k+1}^{k,k-1,k} \times Q_{k+1}^{k,k,k-1}$.  The weight, $\w{1}_\phi = (1+z)\col{1 & 0 & 0}{0& 1 & 0}{x &y &1+z}$, so $\w{1}_\phi \tilde u \in Q_k^{k,k,k} \times Q_k^{k,k,k} \times Q_k^{k,k,k}$.  The cases $s=2$ and $s=3$ follow similarly.

Since $Q_k^{k,k,k} \subset L^\infty(\ip)$ each $u_{\hat i} \circ \phi$ is bounded on $\ip$, which means that $u_{\hat i}$ is bounded on $\hat K$ and therefore $u_i$ is bounded on $K$.  Hence $\norm{u}_{0, K}$ is finite.  By \eqref{exterior}, $du \in \OUU{s+1}_k(K)$, so $\norm{du}_{0, K}$ is finite too and $u \in \cH^{(s)}(K)$.
\end{proof}
In order to construct pyramidal elements that are compatible with neighbouring tetrahedral (and hence polynomial) elements, subspaces of the underlying approximation spaces, $\OUU{s}_k(K)$ are identified that contain only those functions whose traces on the triangular faces of the pyramid are contained in the trace space of the corresponding tetrahedral element\footnote{The trace spaces for the tetrahedral Lagrange, Nedelec edge and Nedelec face elements are given in \cite{monk:maxwell}.  By construction, the traces of the underlying spaces on the quadrilateral face of the pyramid already match those of the hexahedral elements}.  These approximation spaces are denoted $\UU{s}_k(K)$.  

Each approximation space is equipped with degrees of freedom that induce a linear interpolation operator, 
\begin{align}\label{interpoperator}
\Pi^{(s)}_{k,K} : \cH^{(s), 1/2+\epsilon}(K) \rightarrow \UU{s}_k(K),\qquad \epsilon > 0.
\end{align}
which completes the definition of the finite elements.  The necessity of the extra $1/2 + \epsilon$ regularity can be seen as a consequence of taking point evaluations at the vertices of the pyramid for the $s=0$ elements.  It is necessary for both the projection based interpolants of \cite{demkowicz:projection} and the more explicit construction given in \cite{phillips:pyramid}. 

Given a mesh, $\mathcal{T}_h$, for $\Omega$, we can assemble a global approximation space for $\cH^{(s)}(\Omega)$, 
\begin{align}\label{globalapprox}
\cV^{(s)}_h = \{ v \in \cH^{(s)}(\Omega) : v|_K \in \UU{s}_k(K) \; \forall K \in \mathcal{T}_h\}.
\end{align}
The element-wise interpolation operators respect traces on the boundary of the pyramid, i.e. 
\begin{align*}\trace u |_{\partial K} = 0 \Rightarrow \trace \Pi^{(s)}_{k,K} u |_{\partial K} = 0,
\end{align*} so we can define a bounded global interpolation operator $\Pi_h^{(s)}: \cH^{(s),1/2+\epsilon}(\Omega) \rightarrow \cV^{(s)}_h$ by $(\Pi_h^{(s)}u) |_K := \Pi^{(s)}_{k,K} (u|_K)$ for all $K \in \mathcal{T}_h$.  

\subsection{Conical product rule}
Quadrature rules on the pyramid can be deduced as special cases of the conical product rule presented by Stroud \cite{stroud:acmiac, hammermarlowstroud:simplexescones}.  Stroud defines the quadrature scheme for any continuous function, $f \in C(\hat K)$, 
\begin{align}\label{quadpyramid}
\QS(f):=\sum_{i,j,l} f(\xix_i(1-\xiz_l), \xix_j(1-\xiz_l),z_l)\lambda_i\lambda_j\mu_l.
\end{align}
He shows that given $n \ge 0$, a sufficient condition for $\QS(p) =  \int_\rp p \;d\hat x$ for any polynomial, $p \in \PP{n}(\hat x)$, is that the two one-dimensional quadrature schemes given by the points $\xix_i$ and $\xiz_l$ with respective weights $\lambda_i$ and $\mu_l$ satisfy
\begin{align} \label{quadalpha}
\sum_i \lambda_i g(\xix_i) &= \int_0^1 g(x) dx \quad \forall g \in \PP{n},\\
\sum_i \mu_i h(\xiz_i) &=  \int_0^1 (1-z)^2h(z) dz  \quad \forall h \in \PP{n}\label{quadbeta}.
\end{align}
The $k+1$ point Gauss-Legendre quadrature rule can be used to generate $\xix_i$ and $\lambda_i$ that make \eqref{quadalpha} exact for polynomials of degree $2k+1$.  The $k+1$ point Gauss-Jacobi scheme based on the Jacobi polynomial\footnote{The Jacobi polynomials, $P_{n}^{(a,b)}(s)$, $n \ge 0$, are typically defined on the interval $[-1,1]$.  Under the change of variables, $s = 2t-1$, they orthogonal with respect to the weight $(1-t)^at^b$ on the interval $[0,1]$.}, $P_{k+1}^{(2,0)}$, generates $\xiz_i$ and $\mu_i$ that make \eqref{quadbeta} exact for polynomials of degree $2k+1$.  We denote the quadrature scheme for $\hat K$ based on \eqref{quadpyramid} that uses these points and weights as $\QS_{k,\hat K}$.  The error,
\begin{align*}
E_{k,\hat K}(f) := \QS_{k,\hat K}(f) - \int_\rp f(\hat x) d\hat x.
\end{align*}
will be zero when $f \in \PP{2k+1}$.

When $f \in C(K)$, where $K$ is a pyramid equipped with a change of coordinates $\phi_K : \rp \rightarrow K$, we can define the quadrature and error functionals:   
\begin{align} 
\QS_{k,K}(f) &:= \QS_{k,\hat K}\left(\abs{D\phi_K}\hat f\right) \sim \int_K f(x) dx \label{Kkquad},\\
E_{k,K}(f) &:= E_{k,\hat K}\left(\abs{D\phi_K} \hat f\right) = \QS_{k,K}(f) - \int_K f(x) dx, \label{Etransform}
\end{align}
where $\hat f = f \circ \phi_K$, i.e. the expression of $f$ in the reference coordinate system, $\hat x$.  

\section{Pyramidal approximation spaces and quadrature}\label{sectionstroud}
If $u$ and $v$ are polynomials of degree $k$ then their product, $uv \in \PP{2k}$ so from Stroud's work, we know that $\QS_{k,\rp}(uv) = \int_\rp uv$.
In this section, we shall prove the stronger result:
\begin{theorem}\label{exacttheorem}
Let $K$ be an affine pyramid; fix $k\ge 1$; let $s \in \{0,1,2,3\}$ and let $A \in \Theta^{s}(K)$ be a constant tensor field.  Then for for any $u,v \in \UU{s}_k(K)$, the quadrature scheme $\QS_{k,K}$ exactly evaluates the product, $(u,v)_{A,K}$, i.e.
\begin{align*}
\QS_{k,K}(A^{ij}u_iv_j) =  (u,v)_{A, K}.
\end{align*}
\end{theorem}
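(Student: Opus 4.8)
The plan is to transport the identity first to the reference pyramid $\rp$ along the affine map $\phi_K$, and then to the infinite pyramid $\ip$ along the projective map $\phi$, where both the integrand and the conical product rule become separable. Since $\langle u,v\rangle_A$ is a $0$-form its coordinate expression is invariant, so $(A^{ij}u_iv_j)\circ\phi_K=A^{\hat i\hat j}u_{\hat i}v_{\hat j}$; because $\phi_K$ is affine, $\abs{D\phi_K}$ is constant and, by \eqref{tensortransform}, $A^{\hat i\hat j}$ is again a constant tensor field on $\rp$. Combining the definition \eqref{Kkquad} of $\QS_{k,K}$ with the change of variables \eqref{bformchangevariables} and cancelling the constant $\abs{D\phi_K}$, it therefore suffices to prove
\[
\QS_{k,\hat K}(g)=\int_\rp g\,d\hat x,\qquad g:=A^{\hat i\hat j}u_{\hat i}v_{\hat j}.
\]

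Now I bring in the structural input. Since $\UU{s}_k(K)\subseteq\OUU{s}_k(K)$ by construction, Lemma \ref{UUuniformity} gives $u_{\hat i}\circ\phi\in Q_k^{k,k,k}$ and $v_{\hat j}\circ\phi\in Q_k^{k,k,k}$ for all $\hat i,\hat j\in\prox{s}$, and since $A^{\hat i\hat j}$ is constant, $(g\circ\phi)(x,y,z)=p(x,y,z)/(1+z)^{2k}$ for some $p\in Q^{2k,2k,2k}[x,y,z]$; by linearity it is enough to treat a single monomial $p(x,y,z)=x^ay^bz^c$ with $0\le a,b,c\le 2k$. Write $(\xix_i,\lambda_i)$ for the Gauss--Legendre data of \eqref{quadalpha} and $(\xiz_l,\mu_l)$ for the Gauss--Jacobi data of \eqref{quadbeta} underlying $\QS_{k,\hat K}$ in \eqref{quadpyramid}. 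The node $\bigl(\xix_i(1-\xiz_l),\xix_j(1-\xiz_l),\xiz_l\bigr)$ has $\phi$-preimage $\bigl(\xix_i,\xix_j,\tfrac{\xiz_l}{1-\xiz_l}\bigr)$, at which $1+z=(1-\xiz_l)^{-1}$, so a short computation gives
\[
g\bigl(\xix_i(1-\xiz_l),\xix_j(1-\xiz_l),\xiz_l\bigr)=p\bigl(\xix_i,\xix_j,\tfrac{\xiz_l}{1-\xiz_l}\bigr)(1-\xiz_l)^{2k}=\xix_i^a\,\xix_j^b\,\xiz_l^c(1-\xiz_l)^{2k-c},
\]
whence $\QS_{k,\hat K}(g)=\bigl(\sum_i\lambda_i\xix_i^a\bigr)\bigl(\sum_j\lambda_j\xix_j^b\bigr)\bigl(\sum_l\mu_l\,\xiz_l^c(1-\xiz_l)^{2k-c}\bigr)$. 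On the other hand, the change of variables $\hat x=\phi(\tilde x)$ together with $\abs{D\phi}=(1+z)^{-4}$ (see \eqref{weight3}) gives
\[
\int_\rp g\,d\hat x=\int_0^1\!\!\int_0^1\!\!\int_0^\infty\frac{x^ay^bz^c}{(1+z)^{2k+4}}\,dz\,dx\,dy=\Bigl(\int_0^1 x^a\,dx\Bigr)\Bigl(\int_0^1 y^b\,dy\Bigr)\Bigl(\int_0^\infty\frac{z^c}{(1+z)^{2k+4}}\,dz\Bigr).
\]

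It remains to match the three factors. For the $x$- and $y$-directions, $a,b\le 2k$, so \eqref{quadalpha} (exact to degree $2k+1$) gives $\sum_i\lambda_i\xix_i^a=\int_0^1 x^a\,dx$ and similarly for $b$. For the $z$-direction, $\xiz^c(1-\xiz)^{2k-c}$ is a polynomial of degree $2k$ in $\xiz$, so \eqref{quadbeta} (also exact to degree $2k+1$) gives
\[
\sum_l\mu_l\,\xiz_l^c(1-\xiz_l)^{2k-c}=\int_0^1(1-z)^2 z^c(1-z)^{2k-c}\,dz=\int_0^1 z^c(1-z)^{2k+2-c}\,dz,
\]
while the substitution $z=w/(1-w)$, $dz=(1-w)^{-2}\,dw$, turns $\int_0^\infty z^c(1+z)^{-(2k+4)}\,dz$ into exactly the same $\int_0^1 w^c(1-w)^{2k+2-c}\,dw$. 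Hence the two sides of the reduced identity coincide, which gives the claim.

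I do not expect a genuine obstacle: the essential structural fact is already Lemma \ref{UUuniformity}, and everything else is the change-of-variables bookkeeping above. The one point that has to be checked with care is the $z$-direction matching in the last step, and this is precisely where the particular shape of Stroud's rule is used, namely that the Gauss--Jacobi weight in \eqref{quadbeta} is $(1-z)^2$ and not some other power. Informally, the Jacobian $(1+z)^{-4}$ and the denominator $(1+z)^{-2k}$ inherited from $Q_{2k}^{2k,2k,2k}$ combine so that, after passing back from $\ip$ to $\rp$, the improper $z$-integral becomes $\int_0^1 z^c(1-z)^{2k+2-c}\,dz$, which the degree-$2$ Jacobi weight reproduces exactly for every $0\le c\le 2k$; with a different weight, or without the uniform $Q_k^{k,k,k}$ structure of the reference components supplied by Lemma \ref{UUuniformity}, this matching would break down.
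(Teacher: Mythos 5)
Your proof is correct and follows essentially the same route as the paper: reduce to the reference pyramid using the affineness of $\phi_K$, invoke Lemma \ref{UUuniformity} to place $(A^{\hat i\hat j}u_{\hat i}v_{\hat j})\circ\phi$ in $Q_{2k}^{2k,2k,2k}$, decompose into monomials, and exploit the separability of the conical product rule together with the degree-$(2k+1)$ exactness of the Gauss--Legendre and Gauss--Jacobi $(1-\zeta)^2$ rules in \eqref{quadalpha}--\eqref{quadbeta}. The only cosmetic difference is that the paper packages the monomial computation as Lemma \ref{exactbaselemma} and does the final comparison on the finite pyramid via the substitution $\xi=(1-\zeta)s$, $\eta=(1-\zeta)t$, whereas you compare both sides on the infinite pyramid via $z=w/(1-w)$; the two calculations are equivalent.
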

To do this, we first need to understand exactly which functions our quadrature scheme integrates exactly.
\begin{lemma} \label{exactbaselemma}  \label{exactcor}
Suppose that $f$ is a function defined on a pyramid, $K$, and that the representation of $f$ in the infinite pyramid coordinate system: $\tilde{f} = f \circ \phi_K^{-1} \circ \phi^{-1}$, lies in the space $Q_{2k+1}^{2k+1,2k+1,2k+1}$.  Then the quadrature scheme, $\QS_{k,K}$ is exact for $f$:
\begin{align*}
\QS_{k,K}(f) = \int_{K} f d x
\end{align*}
\end{lemma}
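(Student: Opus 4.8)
The plan is to reduce the statement on a general affine pyramid $K$ to the reference pyramid $\hat K$, where Stroud's exactness result applies, and then track how the change of coordinates interacts with the weighted-polynomial structure. First I would use the definitions \eqref{Kkquad}--\eqref{Etransform}: since $\QS_{k,K}(f) = \QS_{k,\hat K}(|D\phi_K| \hat f)$ and $\int_K f\,dx = \int_{\hat K} |D\phi_K|\,\hat f\,d\hat x$, it suffices to show that $\QS_{k,\hat K}$ is exact for $g := |D\phi_K|\,\hat f$, i.e. that $E_{k,\hat K}(g) = 0$. Because $\phi_K$ is affine, $|D\phi_K|$ is a positive constant, so $g$ is just a constant multiple of $\hat f$; hence it is enough to show $\QS_{k,\hat K}(\hat f) = \int_{\hat K}\hat f\,d\hat x$, where $\hat f = f\circ\phi_K$ is the representation of $f$ in the reference coordinates.

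Next I would pull back through the projective map $\phi : \ip \to \hat K$ of \eqref{projmap}. By hypothesis $\tilde f = \hat f \circ \phi$ lies in $Q_{2k+1}^{2k+1,2k+1,2k+1}$, so $\hat f \circ \phi = \dfrac{p(x,y,z)}{(1+z)^{2k+1}}$ with $p \in Q^{2k+1,2k+1,2k+1}[x,y,z]$. I would then apply the explicit form of $\phi$ to write $\hat f$ directly as a function of the reference coordinates $(\xix,\xiy,\xiz)$: from $(\xix,\xiy,\xiz) = \bigl(\tfrac{x}{1+z},\tfrac{y}{1+z},\tfrac{z}{1+z}\bigr)$ one gets $1+z = \tfrac{1}{1-\xiz}$, $x = \tfrac{\xix}{1-\xiz}$, $y = \tfrac{\xiy}{1-\xiz}$, $z = \tfrac{\xiz}{1-\xiz}$. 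Substituting, each monomial $x^a y^b z^c/(1+z)^{2k+1}$ with $0\le a,b,c\le 2k+1$ becomes $\xix^a\,\xiy^b\,\xiz^c\,(1-\xiz)^{2k+1-a-b-c}$; since $a,b,c \le 2k+1$ we always have $2k+1 - a - b - c \ge -(4k+2)$, but more importantly $a + b + c \le 3(2k+1)$ only matters if the exponent of $(1-\xiz)$ is negative — it is not, because $(1+z)^{-(2k+1)} = (1-\xiz)^{2k+1}$ and multiplying by $x^a y^b z^c$ only lowers the total power of $(1-\xiz)$ by at most $a+b+c$; the worst case $a=b=c$ does not occur simultaneously without the constraint, so one must instead argue monomial by monomial. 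A cleaner route: $\hat f(\xix,\xiy,\xiz)$ is a polynomial in $\xix,\xiy,\xiz$ provided each exponent $2k+1 - a - b - c$ stays nonnegative, which fails in general, so I would instead bound the structure as follows — write $\hat f = \sum_{a,b,c} c_{abc}\,\xix^a\xiy^b\xiz^c(1-\xiz)^{2k+1-a-b-c}$ and observe that this is a polynomial of degree at most $2k+1$ in $\xix$, at most $2k+1$ in $\xiy$, and, grouping powers of $\xiz$, at most $2k+1$ in $\xiz$ as well, since $(1-\xiz)^{2k+1-a-b-c}$ with $a+b+c \le 2k+1$ contributes nonnegative powers and with $a+b+c > 2k+1$ the term is not of this form — so I should restrict attention to the actual spaces $\UU{s}_k$, where Lemma~\ref{UUuniformity} guarantees $u_{\hat\pii}\circ\phi \in Q_k^{k,k,k}$.

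Here is the key point that makes everything work, and it is where I would route the argument for the \emph{corollary form} actually needed in Theorem~\ref{exacttheorem}: for $u,v \in \UU{s}_k(K)$ and constant $A$, each term $A^{ij} u_i v_j$ pulls back so that its infinite-pyramid representation is a product of two elements of $Q_k^{k,k,k}$ (after accounting for the weights $\w{s}_\phi$ and the constant Jacobian), hence lies in $Q_{2k}^{2k,2k,2k} \subset Q_{2k+1}^{2k+1,2k+1,2k+1}$, which is exactly the hypothesis of this lemma. So the lemma as stated is the workhorse. To prove the lemma itself, I would show that $\hat f$, the reference-coordinate representation of any $f$ with $\tilde f \in Q_{2k+1}^{2k+1,2k+1,2k+1}$, is in fact a polynomial on $\hat K$ of total form-degree controlled so that $\QS_{k,\hat K}$ integrates it exactly: concretely, $\hat f$ is a polynomial in $(\xix,\xiy,\xiz)$ of degree $\le 2k+1$ in each variable separately along directions tangent to the square cross-sections, which is precisely the class that the conical product rule — built from an $(k{+}1)$-point Gauss--Legendre rule in $\xix,\xiy$ (exact to degree $2k+1$) and an $(k{+}1)$-point Gauss--Jacobi rule with weight $(1-z)^2$ in $\xiz$ (exact to degree $2k+1$) — integrates exactly, via the substitution $\xix \mapsto \xix(1-\xiz)$, $\xiy \mapsto \xiy(1-\xiz)$ appearing in \eqref{quadpyramid}.

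The main obstacle I anticipate is the bookkeeping in the last step: after the substitution $\xix_i \mapsto \xix_i(1-\xiz_l)$ in \eqref{quadpyramid}, one must verify that for $\hat f$ arising from $Q_{2k+1}^{2k+1,2k+1,2k+1}$ the integrand in the $\xix$- and $\xiy$-directions has degree $\le 2k+1$ and that the accumulated powers of $(1-\xiz)$ — both from the substitution and from the $(1+z)^{-(2k+1)} = (1-\xiz)^{2k+1}$ weight — combine to a polynomial of degree $\le 2k+1$ in $\xiz$ against which the Gauss--Jacobi rule with weight $(1-\xiz)^2$ is exact. The count is tight and must be done carefully: a monomial $\xix^a\xiy^b\xiz^c(1-\xiz)^{2k+1-a-b-c}$ becomes, after $\xix\mapsto\xix(1-\xiz)$ and $\xiy\mapsto\xiy(1-\xiz)$, equal to $\xix^a\xiy^b\xiz^c(1-\xiz)^{2k+1-c}$; the $\xix$- and $\xiy$-degrees are $a,b \le 2k+1$ (Gauss--Legendre handles these), and the remaining $\xiz^c(1-\xiz)^{2k+1-c}$ has $\xiz$-degree $\le 2k+1$, so after pulling out one factor $(1-\xiz)^2$ — available since $c \le 2k+1$ forces $2k+1-c \ge 0$, and we need $2k+1 - c \ge 2$, i.e. $c \le 2k-1$; the boundary cases $c \in \{2k,2k+1\}$ need the separate observation that those monomials, after the square-cross-section substitution, still leave a degree-$\le 2k+1$ polynomial in $\xiz$ once the weight $(1-\xiz)^2$ is factored from the Jacobian $|D\phi|$ contribution in \eqref{quadpyramid} — this is the delicate part and I would isolate it as the crux of the proof.
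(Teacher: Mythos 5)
Your overall strategy is the same as the paper's: work monomial by monomial in the infinite pyramid coordinates, pull back to $(\xix,\xiy,\xiz)$, and exploit the fact that the conical product points $(\xix_i(1-\xiz_l),\xix_j(1-\xiz_l),\xiz_l)$ turn the substituted monomial into a product of three one-dimensional polynomials of degree at most $2k+1$. Your key computation --- that $\xix^a\xiy^b\xiz^c(1-\xiz)^{2k+1-a-b-c}$ evaluated at the quadrature points factors as $\xix_i^a\,\xix_j^b\,\xiz_l^c(1-\xiz_l)^{2k+1-c}$ --- is exactly the paper's central step. However, the writeup contains two genuine problems that prevent it from closing.

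First, you assert that the proof should show ``$\hat f$ is in fact a polynomial on $\hat K$.'' This is false: for $\tilde f = xy/(1+z)$ (i.e.\ $a=b=c=1$ in the paper's spanning set $x^ay^b/(1+z)^c$) one gets $\hat f = \xix\xiy/(1-\xiz)$, the rational basis function \eqref{basevertex} with a singularity at the apex. The whole point of the lemma is that exactness survives for such rational functions because the quadrature nodes lie on the rays through the apex, so the negative powers of $(1-\xiz)$ cancel at the nodes. Your fallback --- restricting to $\UU{s}_k$ via Lemma \ref{UUuniformity} --- would prove only the special case needed for Theorem \ref{exacttheorem}, not the lemma as stated. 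Second, and more seriously, your ``delicate part'' at the end is a misreading of \eqref{quadbeta}: the Gauss--Jacobi identity is $\sum_l\mu_l h(\xiz_l)=\int_0^1(1-z)^2h(z)\,dz$ for every $h\in\PP{2k+1}$, so one simply applies it to $h(\xiz)=\xiz^c(1-\xiz)^{2k+1-c}\in\PP{2k+1}$; the weight $(1-z)^2$ lives on the integral side and is precisely the cross-sectional area factor produced by the change of variables $\xix=(1-\xiz)s$, $\xiy=(1-\xiz)t$ (which gives $d\xix\,d\xiy=(1-\xiz)^2\,ds\,dt$). There is no need to ``pull a factor $(1-\xiz)^2$ out of'' the summed function, and consequently no boundary cases $c\in\{2k,2k+1\}$ to treat separately. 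Once this is corrected, the three one-dimensional exactness statements combine and the change of variables recovers $\int_{\hat K}\hat f\,d\hat x$, which is how the paper finishes.
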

\begin{proof}
It suffices to consider functions $p$ with a representation in the infinite pyramid coordinate system:
\begin{align*}
\tilde{p}(x,y,z) = \frac{x^ay^b}{(1+z)^{c}} \qquad 0\le a,b,c\le 2k+1,
\end{align*}
since these monomials span the space $Q_{2k+1}^{2k+1,2k+1,2k+1}$.  In finite reference coordinates, $p$ has the form $\hat p(\xix,\xiy,\xiz) = \xix^a\xiy^b(1-\xiz)^{c-a-b}$, and so, using \eqref{quadpyramid}:
\begin{align*}
\QS_{k,K}(p) &= \QS_{k}(\abs{D \phi_K} \hat p) \\
&=\abs{D \phi_K}\sum_{i,j,l} \xix_i^a(1-\xiz_l)^a\xix_j^b(1-\xiz_l)^b(1-\xiz_l)^{c-a-b} \lambda_i\lambda_j\mu_l\\
&=\abs{D \phi_K}\sum_i \lambda_i\xix_i^a \sum_j \lambda_j\xix_i^b \sum_l \mu_l(1-\xiz_l)^{c} \\
&=\abs{D \phi_K}\int_0^1 s^a ds \int_0^1 t^b dt \int_0^1 (1-\xiz)^{c+2} d\xiz. 
\end{align*}
The last step is justified because each of the sums is a quadrature rule applied to a polynomial of degree $\le 2k+1$ and so we can apply \eqref{quadalpha} and \eqref{quadbeta}.  Apply the change of variables $\xix = (1-\xiz)s$ and $\xiy = (1-\xiz)t$ to obtain:
\begin{align*}
\QS_{k,K}(p) &=\abs{D \phi_K}\int_0^1\int _0^{1-\xiz}\int _0^{1-\xiz} (1-\xiz)^{c-a-b}  \xix^a   \xiy^b \:d \xix d \xiy d\xiz \\
&=\abs{D \phi_K}\int_{\hat K} \hat p(\xix,\xiy,\xiz) d \hat x \\
&= \int_K p dx. \qedhere
\end{align*}
\end{proof}
We can now prove Theorem \ref{exacttheorem} where, in fact, we will only need Lemma \ref{exactbaselemma} to be true for $\tilde f \in Q_{2k}^{2k,2k,2k}$, which is a subspace of $Q_{2k+1}^{2k+1,2k+1,2k+1}$.

\begin{proof}[Proof of Theorem \ref{exacttheorem}]
Let $u,v \in \UU{s}_k(K)$.  $A \in \Theta^{(s)}(K)$ is a constant and so, by the first part of Lemma \ref{UUuniformity}, in infinite reference coordinates, the function $\langle u,v \rangle_A$ satisfies:
\begin{align*}
A^{\tilde\pii \tilde\pij} u_{\tilde\pij} v_{\tilde\pii} \in Q^{2k,2k,2k}_{2k}.
\end{align*}
Hence by Lemma \ref{exactcor},
\begin{align*}
\QS_{k,K} \left(A^{\pii\pij}u_\pij v_\pii\right) = \int_K A^{\pii\pij}u_\pij v_\pii = (u,v)_{A,K}.  \end{align*}
\end{proof}

Observe that for the spaces $\UU{3}_k(K)$, the integrand, $A^{\tilde\pii \tilde\pij} u_{\tilde\pij} v_{\tilde\pii} \in Q_{2k-2}^{2k-2,2k-2,2k-2}$, so we could in fact use the scheme $\QS_{k-1}$.

\section{Numerical integration and convergence}\label{sectionexisting}
Let $a:\mathcal{H}^{(s)}(\Omega) \times \mathcal{H}^{(s)}(\Omega) \rightarrow \RR$ be an elliptic bilinear form and let $V \subset \cH^{(s)}(\Omega)$ be chosen so that the problem of finding $u \in V$ such that 
\begin{align} \label{ellipticproblem}
a(u,v) = f(v) \quad \forall v \in V 
\end{align}
has a unique solution for any linear functional, $f \in V'$.  A discrete version of this problem is to find $u_h \in V_h$ such that 
\begin{align} \label{discreteellipticproblem}
a_h(u_h,v) = f(v) \quad \forall v \in V_h,
\end{align}
where $V_h$ is an approximating subspace of $V$ and $a_h$ approximates $a$ using numerical integration\footnote{We choose not to consider the effect of approximating $f(\cdot)$ by some $f_h(\cdot)$ using numerical integration because it is no different on the pyramid than for other elements.  Error estimates may be obtained by applying the standard argument and using Theorem \ref{exacttheorem}.} .  When $V_h$ is assembled using polynomial elements of degree $k$, the analysis of the effect of the numerical integration is classical; good expositions may be found in \cite{ciarlet:fem, brenner:mtfem}.  

For an example, take an elliptic bilinear form $a:H^{1}_0(\Omega) \times H^{1}_0(\Omega) \rightarrow \RR$, defined as 
\begin{align} 
a(u,v) = \int_\Omega A(d u, d v)
\end{align}
where $A \in W^{k,\infty}\Theta^{(1)}(\Omega)$ and is uniformly positive definite.   Assume that $V_h \subset H^1_0(\Omega)$ is some approximation space assembled using $k$th order polynomial  finite elements and that there exists a numerical integration rule, $S_{h,k,\Omega}(\cdot)$ which satisfies $S_{h,k,\Omega}( \partial_i u\partial_j v) = \int_\Omega( \partial_i u\partial_j v)$ for any $i$ and $j$ and all pairs of functions  $u,v \in V_h$.  Let $a_h(u,v) = S_{h,k,\Omega}(A(du,dv))$.  It is shown in \cite[page 179]{ciarlet:fem} that the solution of \eqref{discreteellipticproblem} will satisfy the error estimate:
\begin{align*}
\norm{u-u_h}_1 \le C h^k(\abs{u}_{k+1} + \norm{A}_{k,\infty}\norm{u}_{k+1}).
\end{align*}
This result is contingent on an estimate of the consistency error:
\begin{align}\label{consistency}
\sup_{w_h \in V_h} \frac{\abs{a(\Pi_h u, w_h) - a_h(\Pi_h u, w_h)}}{\norm{w_h}_1} \le Ch^k\norm{A}_{k,\infty}\norm{u}_{k+1},
\end{align}
where $\Pi_h : H^1_0(\Omega) \rightarrow V_h$ is an interpolation operator.  The constant $C = C(\Omega, k)$ is independent of $h$.  

More generally, an analysis for mixed problems can be found in \cite{brezzi:mhfem}.  The conclusion is the same:  in order to preserve an $O(h^k)$ approximation error, each bilinear form must satisfy an $O(h^k)$ consistency error estimate.

The key ingredient in the proof of the consistency error estimate, \eqref{consistency} is a local estimate:
\begin{theorem}[See \cite{ciarlet:fem}, Theorem 4.1.2] \label{ciarlet}
Given a simplex, $K \in \mathcal{T}_h$, assume that for any polynomial $\psi \in \PP{2k-2}(K)$, the quadrature error, $E_K(\psi) = 0$.  Then there exists a constant $C$ independent of $K$ and $h$ such that
\begin{align*}
&\forall A \in W^{k,\infty}(K),\quad \forall p,q \in \PP{k}(K)\\
&\abs{E_K(A(dp,dq))} \le C h^k \norm{A}_{k,\infty,K} \norm{d p }_{k-1,K} \abs{d q}_{0,K} 
\end{align*}
\qed
\end{theorem}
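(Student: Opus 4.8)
The plan is the classical three-step recipe — transform to the reference simplex, run a Bramble--Hilbert argument there, and scale back — arranged so that the non-polynomiality of $A$ is confined to a single factor. Write $\hat p=p\circ\phi_K$, $\hat q=q\circ\phi_K$ (again polynomials of degree $\le k$ on $\hat K$) and let $\hat A$ be the reference-coordinate representation of $A$. Since $A(dp,dq)$ is a $0$-form, its value is coordinate-independent, so $\widehat{A(dp,dq)}=\hat A^{\hat i\hat j}\,\partial_{\hat i}\hat p\,\partial_{\hat j}\hat q$ with $\hat A^{\hat i\hat j}\in W^{k,\infty}(\hat K)$ and $\partial_{\hat i}\hat p,\partial_{\hat j}\hat q\in\PP{k-1}(\hat K)$, and \eqref{Etransform} gives $E_K(A(dp,dq))=\abs{D\phi_K}\,E_{\hat K}\bigl(\hat A^{\hat i\hat j}\,\partial_{\hat i}\hat p\,\partial_{\hat j}\hat q\bigr)$ with $\abs{D\phi_K}$ constant. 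Because $\phi_K$ is affine the hypothesis makes $E_{\hat K}$ vanish on $\PP{2k-2}(\hat K)$. It therefore suffices to bound $E_{\hat K}\bigl(\hat a\,\partial_{\hat i}\hat p\,\partial_{\hat j}\hat q\bigr)$ for a single coefficient $\hat a\in W^{k,\infty}(\hat K)$ and sum over the finitely many index pairs.

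The step that makes everything fit is to approximate the \emph{grouped} product $\psi:=\hat a\,\partial_{\hat i}\hat p$, rather than $\hat a$ on its own, by a polynomial. Fix a bounded linear projection $\Pi\colon W^{k,\infty}(\hat K)\to\PP{k-1}(\hat K)$ reproducing $\PP{k-1}(\hat K)$ (e.g. the $L^2(\hat K)$-orthogonal projection) and split $\psi=\Pi\psi+(\psi-\Pi\psi)$. Then $(\Pi\psi)\,\partial_{\hat j}\hat q\in\PP{2k-2}(\hat K)$, so $E_{\hat K}\bigl((\Pi\psi)\,\partial_{\hat j}\hat q\bigr)=0$; and since $E_{\hat K}$ is bounded in the sup-norm (a finite sum of point evaluations plus an integral) and $\partial_{\hat j}\hat q$ ranges over a fixed finite-dimensional space, $\bigl|E_{\hat K}\bigl((\psi-\Pi\psi)\,\partial_{\hat j}\hat q\bigr)\bigr|\le C\norm{\psi-\Pi\psi}_{L^\infty(\hat K)}\,\abs{d\hat q}_{0,\hat K}$. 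By the Bramble--Hilbert (Deny--Lions) lemma and boundedness of $\Pi$, $\norm{\psi-\Pi\psi}_{L^\infty(\hat K)}\le C\abs{\psi}_{W^{k,\infty}(\hat K)}$, and a Leibniz estimate gives $\abs{\psi}_{W^{k,\infty}(\hat K)}\le C\sum_{r=1}^{k}\abs{\hat a}_{W^{r,\infty}(\hat K)}\,\abs{\partial_{\hat i}\hat p}_{W^{k-r,\infty}(\hat K)}$, the $r=0$ term dropping because $\partial_{\hat i}\hat p$ has degree $\le k-1$.

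Finally I would scale back to $K$: pass from the $W^{m,\infty}(\hat K)$ norms of the polynomial factors to $H^m(\hat K)$ norms (norm equivalence on the fixed space $\PP{k-1}(\hat K)$) and apply \eqref{hscalingA} and \eqref{hscalingu}. The accounting point is that in each surviving Leibniz term the $r$ derivatives on $A$ and the $k-r$ on $dp$ make the $h$-powers from \eqref{hscalingA} and \eqref{hscalingu} multiply to $h^{(r-2)+(k-r+1)}=h^{k-1}$ regardless of $r$; a further power of $h$ comes from scaling $d\hat q$, and the two factors $\abs{D\phi_K}^{-1/2}$ in \eqref{hscalingu} cancel the $\abs{D\phi_K}$ left over from \eqref{Etransform}. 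Summing over the index pairs yields $\abs{E_K(A(dp,dq))}\le C\,h^{k}\norm{A}_{k,\infty,K}\norm{dp}_{k-1,K}\abs{dq}_{0,K}$ with $C=C(k,n,\rho)$.

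I expect the real obstacle to be the middle step. The naive move — approximate $\hat a$ itself by a polynomial of degree $k-1$ — fails, because $(\Pi\hat a)\,\partial_{\hat i}\hat p\,\partial_{\hat j}\hat q$ then has degree up to $3k-3>2k-2$ and is no longer annihilated by $E_{\hat K}$; grouping $\hat a\,\partial_{\hat i}\hat p$ before projecting is exactly what keeps the exact part inside $\PP{2k-2}(\hat K)$, at the price of controlling $\abs{\hat a\,\partial_{\hat i}\hat p}_{W^{k,\infty}}$ rather than $\abs{\hat a}_{W^{k,\infty}}$. The second delicate point, handled by the Leibniz estimate, is that the $h$-exponents must sum to exactly $k$ uniformly in the Leibniz index; this is what forces the asymmetric right-hand side (the full $W^{k,\infty}$ norm of $A$, the full $H^{k-1}$ norm of $dp$, but only the $L^2$ norm of $dq$), and verifying the cancellation of the $\abs{D\phi_K}$ factors is where one has to be careful.
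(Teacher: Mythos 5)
Your argument is correct and is essentially the classical proof that the paper defers to Ciarlet: transform to the reference simplex, apply the Bramble--Hilbert lemma to the grouped product $\hat a\,\partial_{\hat i}\hat p$ (so that the polynomial part pairs with $\partial_{\hat j}\hat q$ inside $\PP{2k-2}$), use a Leibniz estimate, and scale back. The paper gives no proof of its own beyond citing this exact combination of Bramble--Hilbert and scaling, so there is nothing further to compare.
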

This theorem is proved by combining a scaling argument with the following famous result from \cite{bramblehilbert}.
\begin{theorem}[Bramble-Hilbert lemma]\label{bhlemma}
Let $\Omega \subset \RR^n$ be open with Lipschitz-continuous boundary.  For some integer $k \ge 0$ and $p \in [0,\infty]$ let the linear functional, $f : W^{k+1,p}(\Omega) \rightarrow \RR$ have the property that $\forall \psi \in \PP{k}(\Omega)$, $f(\psi) = 0$.  Then there exists a constant $C(\Omega)$ such that
\begin{align*}
\forall v \in W^{k+1,p}(\Omega), \quad \abs{f(v)} \le C(\Omega) \norm{f}_{W^{k+1,p}(\Omega)'} \abs{v}_{k+1,p,\Omega}
\end{align*}
where $\norm{\cdot}_{W^{k+1,p}(\Omega)'}$ is the operator norm.
\qed
\end{theorem}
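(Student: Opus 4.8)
The plan is to reduce the asserted estimate to the Deny--Lions inequality on the quotient space $W^{k+1,p}(\Omega)/\PP{k}(\Omega)$. Since $\PP{k}(\Omega)$ is finite dimensional it is a closed subspace, so the quotient $X := W^{k+1,p}(\Omega)/\PP{k}(\Omega)$ is a Banach space under the norm $\norm{\pi v}_X = \inf_{\psi \in \PP{k}(\Omega)} \norm{v + \psi}_{W^{k+1,p}(\Omega)}$, where $\pi$ is the canonical projection. Because $f$ annihilates $\PP{k}(\Omega)$ it factors as $f = \tilde f \circ \pi$ for a bounded functional $\tilde f$ on $X$ with $\norm{\tilde f}_{X'} \le \norm{f}_{W^{k+1,p}(\Omega)'}$, whence for every $v$,
\begin{align*}
\abs{f(v)} = \abs{\tilde f(\pi v)} \le \norm{f}_{W^{k+1,p}(\Omega)'}\, \inf_{\psi \in \PP{k}(\Omega)} \norm{v + \psi}_{W^{k+1,p}(\Omega)}.
\end{align*}
It therefore suffices to produce $C(\Omega)$ with $\inf_{\psi \in \PP{k}(\Omega)} \norm{v+\psi}_{W^{k+1,p}(\Omega)} \le C(\Omega)\,\abs{v}_{k+1,p,\Omega}$ for all $v$.

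To obtain this I would fix linear functionals $\ell_\alpha$ on $W^{k+1,p}(\Omega)$ indexed by multi-indices $\abs{\alpha}\le k$ --- concretely $\ell_\alpha(v) = \int_\Omega \partial^\alpha v\,dx$, which are bounded since $\Omega$ is bounded --- whose restrictions to $\PP{k}(\Omega)$ form a basis of its dual; that they do so follows from a short induction on $\abs{\alpha}$, peeling off the top-order derivatives. By this unisolvence, each $v$ admits a unique $\psi_v \in \PP{k}(\Omega)$ with $\ell_\alpha(v + \psi_v) = 0$ for all $\abs{\alpha}\le k$, and since $\partial^\alpha \psi_v = 0$ whenever $\abs{\alpha}=k+1$ we have $\abs{v+\psi_v}_{k+1,p,\Omega} = \abs{v}_{k+1,p,\Omega}$. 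Thus it is enough to establish the a priori bound
\begin{align*}
\norm{w}_{W^{k+1,p}(\Omega)} \le C(\Omega)\,\abs{w}_{k+1,p,\Omega}\qquad\text{whenever } \ell_\alpha(w) = 0 \text{ for all } \abs{\alpha}\le k.
\end{align*}

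This a priori bound is the crux, and I would prove it by a compactness argument. If it failed there would be a sequence $(w_n)$ with $\ell_\alpha(w_n)=0$, $\norm{w_n}_{W^{k+1,p}(\Omega)}=1$ and $\abs{w_n}_{k+1,p,\Omega}\to 0$. The Lipschitz regularity of $\partial\Omega$ makes the embedding $W^{k+1,p}(\Omega)\hookrightarrow W^{k,p}(\Omega)$ compact (Rellich--Kondrachov for $p<\infty$; $W^{k+1,\infty}(\Omega)\hookrightarrow\hookrightarrow C^k(\overline\Omega)$ via Arzela--Ascoli for $p=\infty$), so a subsequence converges in $W^{k,p}(\Omega)$; together with $\abs{w_n}_{k+1,p,\Omega}\to 0$ it is then Cauchy in $W^{k+1,p}(\Omega)$, converging to some $w$ with $\abs{w}_{k+1,p,\Omega}=0$. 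A function whose order-$(k+1)$ derivatives vanish on the (connected) domain $\Omega$ lies in $\PP{k}(\Omega)$, while continuity of the $\ell_\alpha$ gives $\ell_\alpha(w)=0$ for all $\abs{\alpha}\le k$ and $\norm{w}_{W^{k+1,p}(\Omega)}=1$; unisolvence then forces $w=0$, a contradiction. Stringing the three reductions together proves the lemma.

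The main obstacle is this last compactness step: it is the only place where the hypothesis on $\partial\Omega$ is genuinely used (through the compact embedding and Arzela--Ascoli), it requires separate care at the endpoint $p=\infty$, and it relies on the fact that a distribution with vanishing derivatives of order $k+1$ is a polynomial of degree $\le k$ --- which in turn needs $\Omega$ connected (or of finitely many components, treated one at a time) for ``locally polynomial'' to upgrade to a single element of $\PP{k}(\Omega)$.
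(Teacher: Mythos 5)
The paper does not prove this statement at all: it is quoted as a known result with a citation to Bramble--Hilbert and a \qed, so there is no in-paper argument to compare against. Your proof is the standard one (factor $f$ through the quotient $W^{k+1,p}(\Omega)/\PP{k}(\Omega)$, reduce to the Deny--Lions inequality via momentwise unisolvence of $\PP{k}$, and close with Rellich--Kondrachov compactness), and it is correct; it is essentially the argument found in Ciarlet's book. The only points worth flagging are hypotheses the paper's statement elides but your argument genuinely uses: $\Omega$ must be bounded (for the functionals $\ell_\alpha$ and the compact embedding) and connected, and $p\in[0,\infty]$ should read $p\in[1,\infty]$ --- you correctly identify where each of these enters.
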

In our more general framework, an analogous statement to Theorem \ref{ciarlet} might be:
\begin{proposition}
Let $K \in \mathcal{T}_h$ be a pyramid.  Let $s \in \{0,1,2,3\}$ and  $A \in W^{k,\infty}\Theta^{(s)}(K)$. Then
\begin{align} 
&\forall v,w \in \UU{s}_k(K) \\
&\abs{E_{K,k}(A(v,w))} \le C h^k \norm{A}_{k,\infty,K} \norm{v}_{k-1,K} \norm{w}_{0,K} \label{badlocalestimate}
\end{align}
\end{proposition}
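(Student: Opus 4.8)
The plan is to mimic the classical argument behind Theorem \ref{ciarlet}: pull everything back to the reference pyramid $\rp$, apply a Bramble--Hilbert type estimate there, and then scale back using \eqref{hscalingu} and \eqref{hscalingA}. First I would use the change of variables \eqref{Etransform} to write $E_{k,K}(A(v,w)) = E_{k,\hat K}(\abs{D\phi_K}\,\widehat{A(v,w)})$, so that the quadrature error is expressed entirely on $\hat K$; on $\hat K$ the functional $g \mapsto E_{k,\hat K}(\abs{D\phi_K}\,g)$ vanishes on $\PP{2k+1}(\hat K)$, and in particular on $\PP{2k-2}$, which is the hypothesis one wants to feed into Bramble--Hilbert. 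Next I would fix $v,w \in \UU{s}_k(K)$ and consider the bilinear functional $(v,w) \mapsto E_{k,\hat K}(\abs{D\phi_K}\,\widehat{A(v,w)})$ as a function of $A$ (for $A \in W^{k,\infty}\Theta^{(s)}(\hat K)$): because the quadrature is exact on $\PP{2k-2}$ and, by Theorem \ref{exacttheorem} (or rather its reference-pyramid analogue via Lemma \ref{exactbaselemma}), exact whenever $A$ is constant, one can subtract a polynomial approximation $A - \Pi_{k-1}A$ of degree $k-1$ to $A$, estimate the remaining term, and recombine. The familiar manipulation is: $E_{k,\hat K}(\abs{D\phi_K}\,\widehat{A(v,w)}) = E_{k,\hat K}(\abs{D\phi_K}\,\widehat{(A-\Pi_{k-1}A)(v,w)})$ since the piece with $\Pi_{k-1}A$ is a polynomial of degree $\le 2k-2$ times $\abs{D\phi_K}$; then apply the Bramble--Hilbert lemma in the variable $A$ to bound this by $C\abs{A}_{k,\infty,\hat K}$ times a product of reference seminorms of $v$ and $w$.

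The main obstacle — indeed the point the paper is building toward — is that the components $\hat v_{\hat i}, \hat w_{\hat i}$ are \emph{not} polynomials on $\hat K$: by Lemma \ref{UUuniformity} they satisfy only $\hat v_{\hat i}\circ\phi \in Q_k^{k,k,k}$, i.e. they are rational functions of the reference coordinates. Consequently the estimate of the "remainder" term $E_{k,\hat K}(\abs{D\phi_K}\,\widehat{(A-\Pi_{k-1}A)(v,w)})$ cannot be closed by simply bounding $\abs{\widehat{(A-\Pi_{k-1}A)(v,w)}}$ in $W^{2k+1,\infty}$ or similar, as one would for polynomials, because the required derivatives of the rational factors do not stay controlled uniformly in $h$ after scaling — the bounds \eqref{hscalingu} carry powers of $\rho$ that blow up. This is precisely the phenomenon flagged in the introduction and in the sentence following this Proposition in the paper: the naive per-element argument yields an estimate like \eqref{badlocalestimate} with the wrong dependence, and in fact one cannot assemble the global consistency error \eqref{consistency} from it.

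So the honest answer is that I would attempt the classical route, reach the step where the rationality of $\hat v,\hat w$ forces either a loss of $h$-powers or an uncontrolled constant, and conclude that \eqref{badlocalestimate} as stated is not provable by this method (and the paper presumably goes on to show it must be replaced by the generalised Bramble--Hilbert argument of Section \ref{newapproxspaces}, exploiting that the \emph{products} $A^{ij}v_iw_j$ land in a fixed weighted-polynomial space rather than estimating each factor separately). If one nonetheless wants \emph{a} bound, the cleanest correct statement would keep the seminorms on the reference pyramid — $\abs{E_{k,K}(A(v,w))} \le C\,\abs{A}_{k,\infty,\hat K}\,\norm{v}_{k,\hat K}\,\norm{w}_{0,\hat K}$ via Bramble--Hilbert in $A$ alone, with $v,w$ in the finite-dimensional space so all their norms are equivalent — and only then scale; the key step to watch is exactly whether the scaling \eqref{hscalingu}–\eqref{hscalingA} applied to this reference estimate reproduces the claimed $h^k$ with an $h$-independent constant, which (the paper is telling us) it does not, because $\norm{v}_{k,\hat K}$ for a rational $v$ does not scale like $h^{k+s}\abs{D\phi_K}^{-1/2}\norm{v}_{k,K}$ uniformly.
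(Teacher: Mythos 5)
You have read this Proposition correctly as a strawman: the paper states it only as what an analogous statement ``might be,'' offers no proof, and immediately uses it to motivate the generalised Bramble--Hilbert machinery of Theorem \ref{approxdecomp} and the decomposed spaces of Section \ref{newapproxspaces}. So declining to supply a proof, and explaining instead why the classical route collapses, is exactly the paper's own position. One correction to your diagnosis of \emph{where} it collapses, though: it is not a matter of $\rho$-powers in \eqref{hscalingu} or of constants failing to be uniform in $h$. The breakdown occurs already on the fixed reference pyramid, before any scaling argument is invoked: $\UU{s}_k(K) \not\subset \cH^{(s),k-1}(K)$ for $k \ge 3$, as the vertex shape function \eqref{basevertex} shows (its third $\zeta$-derivative is not in $L^2(\rp)$), so the interpolant estimate \eqref{seminormestimate} that the classical argument requires cannot even be formulated, and for $k$ large enough the quantity $\norm{v}_{k-1,K}$ on the right-hand side of \eqref{badlocalestimate} is itself infinite for some $v \in \UU{s}_k(K)$, rendering the bound vacuous. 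The obstruction is the non-membership of the rational shape functions in high-order Sobolev spaces on the reference element, not a loss of uniformity in the mesh parameters; this is precisely what forces the paper to isolate, via Lemma \ref{decomp} and Lemma \ref{norms}, the components $v_r \in \XU{s}{r,k}(K)$ that do retain $r$ derivatives, and to run the Bramble--Hilbert argument separately on each.
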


The problem is that, unlike the situation for purely polynomial spaces, we cannot differentiate basis functions arbitrarily.  We do not have the inclusion, $\UU{s}_k(K) \subset \cH^{(s),k-1}(K)$ for $k \ge 3$.  As an example, take the $\UU{0}_k(\hat K)$ shape function associated with the base vertex, $(1,1,0)$:
\begin{align} \label{basevertex}
v(\xix, \xiy, \xiz) = \frac{\xix\xiy}{1-\xiz}.
\end{align} 
The $L^2$ norm of its third partial $\xiz$-derivative,
\begin{align}
\int_{\hat K} \left(\pd{^3v}{\xiz^3}\right)^2 d\hat x &=  \int_0^1\int_0^{1-\xiz}\int_0^{1-\xiz} \left(\frac{-6\xix\xiy}{(1-\xiz)^4}\right)^2 d\xix d\xiy d\xiz \\
&=\int_0^1 \frac{9}{(1-\xiz)^2} d\xiz,
\end{align}
is infinite.  

This means that a direct application of the argument in \cite[section 4.1]{ciarlet:fem} would fail when we attempt to use the Bramble-Hilbert lemma (Theorem \ref{bhlemma})  to obtain the estimate
\begin{align}\label{seminormestimate}
\abs{\Pi^{(s)}_{k,\hat{K}}u}_{r,\hat{K}} \le C \abs{u}_{r,\hat{K}} \qquad \forall r \in \{0, \ldots, k\}.
\end{align}

An attempt is made to avoid this problem in \cite{bergotcohendurufle::pyramid} by using the additional projector $\pi_r : H^{r+1}(K) \rightarrow \PP{r}$ satisfying 
\begin{align*}
\forall p \in \PP{r}(K) \quad \pi_r p = p
\end{align*}
on each element, $K$.  This allows element-wise estimates to be established.  Unfortunately, there is no conforming interpolant onto element-wise polynomials for pyramidal elements (see \cite{phillips:pyramid} or \cite{wieners:conforming}).  In particular, there will be discontinuities at the element boundaries, which means that $\norm{u - \pi_r u}_{1,\Omega}$ cannot be bounded.  The alternative interpretation of $\pi_r$ as a global projection onto polynomials would not allow the element-wise estimates to be obtained.

Our solution starts with the observation that not all of the members of each $\UU{s}_k(K)$ behave as badly as the function $v$ defined in \eqref{basevertex}.  Some are polynomial and others are rational but can be differentiated more times before blowing up.  For example, we will see in the proof of Lemma \ref{norms} that $v(\xi,\eta,\zeta) \xi^r \in H^{r+2}(\hat K)$.  So, we start by developing an analogue of \eqref{seminormestimate} that, formally speaking, allows us to retain as much regularity as possible. 
\begin{theorem}\label{approxdecomp}
Let $\Omega \subset \mathbb{R}^n$ be an open set with Lipschitz boundary.  Fix $\alpha \ge 0$ and let $k\ge \alpha$ be an integer.  Suppose that:
\begin{itemize}
\item $R^k \subset H^\alpha(\Omega)$ is a finite dimensional space which includes all polynomials of degree $k$;
\item $\Pi:H^\alpha(\Omega) \rightarrow R^k$ is a bounded linear projection;  
\item There exist $V_r \subset H^{r}(\Omega)$ for each $r \in \{0,\ldots,k\}$ such that we can decompose
\begin{align*}
R^k =  V_{0} \oplus \cdots \oplus V_k.
\end{align*}
\end{itemize}
Meaning that for a given $u \in H^{k}(\Omega)$, the interpolant, $\Pi u \in R^k$, may be decomposed into unique functions, $v_{r} \in V_{r}$,
\begin{align*}
\Pi u = v_{0} + \cdots + v_{k}.
\end{align*}
Then we have the following estimates for some of the functions, $v_r$:  
\begin{itemize}
\item For each $r$ satisfying $\alpha \le r \le k$:
\begin{align}\label{estimate1}
\abs{v_r}_{r} \le C\abs{u}_r.
\end{align}
\item If, additionally, $\tPP{r} \subset V_r$, where the space $\tPP{r}$ consists of polynomials of homogeneous degree, $r$, then for each $r$ satisfying $\alpha \le r+1 \le k$:
\begin{align}\label{rplus1}
\abs{v_r}_{r} \le C\abs{u}_{r+1} + \abs{u}_r.
\end{align}
\end{itemize}
\end{theorem}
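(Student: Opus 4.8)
The plan is to run a Bramble-Hilbert argument, but with the bookkeeping organised around the graded decomposition $R^k = V_0 \oplus \cdots \oplus V_k$ rather than around a single polynomial space. Throughout, $\Omega$ is fixed, so all constants below are harmless; the eventual $h$-dependence is restored by the scaling argument of Section~\ref{sectionpyramid}.

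First I would set up the linear map $u \mapsto v_r$. Writing $P_r\colon R^k \to V_r$ for the projection onto the $r$-th summand, we have $v_r = P_r\Pi u$, a bounded linear map from $H^\alpha(\Omega)$ into the finite-dimensional space $V_r \subset H^r(\Omega)$. Finite-dimensionality makes all norms on $V_r$ equivalent, so order-$r$ differentiation is bounded from $V_r$ (carrying its $H^\alpha$ norm) into $L^2(\Omega)$, yielding the crude bound $\abs{v_r}_r \le C\,\norm{P_r}\,\norm{\Pi}\,\norm{u}_{H^\alpha(\Omega)}$: the right shape, but with a full norm of $u$ where a seminorm is wanted.

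Next I would upgrade the norm to a seminorm by subtracting a polynomial. Since $\Pi$ fixes $\PP{k}$ and $r \le k$, for any $q \in \PP{r-1}(\Omega)$ linearity gives $v_r = P_r q + P_r\Pi(u-q)$. Choosing $q$ to be a Deny-Lions best polynomial approximation of $u$, i.e. $\norm{u-q}_{H^r(\Omega)} \le C(\Omega)\abs{u}_r$ (the ``there exists a good polynomial'' reading of Theorem~\ref{bhlemma}), and using the embedding of $H^r(\Omega)$ into $H^\alpha(\Omega)$, I obtain
\begin{align*}
\abs{v_r}_r \le \abs{P_r q}_r + C\norm{u-q}_{H^\alpha(\Omega)} \le \abs{P_r q}_r + C\abs{u}_r .
\end{align*}
For \eqref{rplus1} the same scheme is used with one more derivative: take $q \in \PP{r}(\Omega)$ with $\norm{u-q}_{H^{r+1}(\Omega)} \le C\abs{u}_{r+1}$ (legitimate since $\alpha \le r+1$), so that $\abs{P_r\Pi(u-q)}_r \le C\abs{u}_{r+1}$, leaving $\abs{P_r q}_r$ to control.

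The real work, and the step I expect to be the main obstacle, is bounding $\abs{P_r q}_r$, the order-$r$ seminorm of the $V_r$-component of a low-degree polynomial. This forces one to pin down how the direct-sum decomposition interacts with the filtration of $R^k$ by polynomial degree. For \eqref{estimate1}, $\deg q \le r-1$ and one needs that the $V_r$-component of $q$ is again a polynomial of degree $\le r-1$, so that $\abs{P_r q}_r = 0$; this is the delicate point, and is where the explicit structure of the spaces \eqref{h1space}--\eqref{l2space} and of the refined families of Section~\ref{newapproxspaces} must be invoked (the decomposition really does respect the polynomial grading for the families at hand). For \eqref{rplus1} the extra hypothesis $\tPP{r}\subset V_r$ makes this transparent: since $\tPP{j}\subset V_j$ for every relevant $j$ in the decompositions in question, uniqueness of the direct-sum decomposition forces the $V_r$-component of $q\in\PP{r}$ to be exactly its homogeneous degree-$r$ part $q_r$, whence
\begin{align*}
\abs{P_r q}_r = \abs{q_r}_r = \abs{q}_r \le \abs{u}_r + \norm{u-q}_{H^r(\Omega)} \le \abs{u}_r + C\abs{u}_{r+1},
\end{align*}
and adding the two pieces gives \eqref{rplus1}; the analogous but simpler bookkeeping gives \eqref{estimate1}. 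A secondary thing to watch is keeping the chain of norm equivalences on the finite-dimensional $V_r$ honest, and checking that the polynomial-approximation constant depends only on $\Omega$, $r$ and $n$.
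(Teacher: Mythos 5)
Your proposal follows essentially the same route as the paper's proof: both arguments rest on the facts that $\Pi$ reproduces $\PP{k}$, that all norms are equivalent on the finite-dimensional summands, and on a Bramble--Hilbert/Deny--Lions step that upgrades the resulting full-norm bound to a seminorm bound. The paper packages this by applying the Bramble--Hilbert lemma to $I-\Psi_r\circ\Pi$, where $\Psi_r$ is a projection of $R^k$ onto $W_r=V_r+\PP{r-1}$, and then writing $\Psi_r\Pi u=v_r+p$ with $p\in\PP{r-1}$ so that $\abs{p}_r=0$; you instead work with the coordinate projection $P_r$ onto $V_r$ directly and subtract the Deny--Lions polynomial by hand. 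These are the same argument in different bookkeeping, and your treatment of the term $P_r\Pi(u-q)$ is correct as written.

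The step you flag as the main obstacle --- bounding $\abs{P_r q}_r$ for $q\in\PP{r-1}$ --- is a genuine gap if the theorem is to be proved from its stated hypotheses alone, and you are right that it cannot be closed abstractly: taking $u=q\in\PP{r-1}$ gives $\abs{u}_r=0$ while $v_r=P_rq$, so \eqref{estimate1} itself forces $\abs{P_rq}_r=0$, yet nothing in the bullet-point hypotheses prevents the $V_r$-component of a degree-$(r-1)$ polynomial from being a rational function with nonvanishing $r$-th seminorm. The statement therefore carries an implicit compatibility assumption between the direct sum and the polynomial filtration, essentially $P_r(\PP{r-1})\subseteq\PP{r-1}$; this does hold for the decomposition of Lemma \ref{decomp}, since a form whose reference components are homogeneous of total degree $m$ is exactly $m$-weighted under $\phi$ and so lies in $\XU{s}{m,k}$, but it is not among the stated hypotheses. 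You should be aware that the paper's own proof has the same hole, only better hidden: the assertion that $(\Psi_r\circ\Pi)u=v_r+p$ with $p\in\PP{r-1}$, where $v_r$ is the canonical component $P_r\Pi u$, requires choosing $\Psi_r$ so that $\Psi_rw-P_rw\in\PP{r-1}$ for every $w$, and such a projection onto $W_r$ exists precisely when $P_r(\PP{r-1})\subseteq\PP{r-1}$. So: same approach, with your ``delicate point'' sitting exactly where the published argument is thinnest; to complete either proof one must either add the compatibility as a hypothesis or verify it for the concrete $\XU{s}{r,k}$ before invoking the theorem. A minor further remark: for \eqref{rplus1} you invoke $\tPP{j}\subset V_j$ for every $j$, which is stronger than the stated hypothesis $\tPP{r}\subset V_r$; with the compatibility above in hand, writing $q=q_r+(q-q_r)$ and applying the \eqref{estimate1} analysis to $q-q_r\in\PP{r-1}$ lets you get by with the stated hypothesis.
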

\begin{proof}
For a given $r \ge \alpha$, write $W_r = V_r \cup \PP{r-1}$.  $W_r \subset R^k$ so we can let $\Psi_r:R^k \rightarrow W_r$ be any surjective linear projection.  $\Psi_r$ is a linear map between finite spaces, so the operator $(I - \Psi_r \circ \Pi): H^{r}(\Omega) \rightarrow W_r \subset H^{r}(\Omega)$ is bounded.  Also, since both $\Psi_r$ and $\Pi$ are projections, and $\PP{r-1} \subset \PP{k} \subset R^k$ we see that $\PP{r-1} \subset \ker (I - \Psi_r \circ \Pi)$.  The Bramble-Hilbert lemma gives
\begin{align*}
\norm{(I - \Psi_r \circ \Pi)u}_{r} \le C\abs{u}_{r}.
\end{align*}
By the definition of $W_r$, we have $(\Psi_r \circ \Pi) u = v_r + p$ for some $p \in \PP{r-1}$.  So $\abs{u - v_r - p}_{r} \le C\abs{u}_{r}$, which implies
\begin{align*}
\abs{v_r}_{r} &\le C\abs{u}_r + \abs{u}_{r} + \abs{p}_{r} =  (C+1)\abs{u}_r.
\end{align*}
The proof of \eqref{rplus1} follows a similar argument.  The operator $(I - \Psi_r \circ \Pi): H^{r+1}(\Omega) \rightarrow W_r \subset H^{r}(\Omega)$ is bounded because $r+1 \le k$.  The additional condition, $\tPP{r} \subset V_r$, means that $\PP{r} \subset W_r$ and so  $\PP{r} \subset \ker (I - \Psi_r \circ \Pi)$.
\end{proof}

\section{A new family of pyramidal approximation spaces} \label{newapproxspaces}
As identified in \cite{bergotcohendurufle::pyramid}, the space $\UU{0}_k$ is sub-optimal in that there exist smaller spaces which contain the same complete space of polynomials and which are compatible with neighbouring tetrahedral and hexahedral elements.  Here we will identify subspaces, $\PU{s}_k(K)$, of each of the original approximation spaces, $\UU{s}_k(K)$ that can be used to construct finite elements with the same approximation and compatibility properties and that still satisfy a commuting diagram property.  This would be an interesting exercise in its own right but within the context of this paper we shall see that the importance of these spaces is that they support a decomposition in the manner of Theorem \ref{approxdecomp} whose components still have enough ``room'' for us to apply a Bramble-Hilbert type argument in Lemma \ref{vrestimate}.  

We start the construction of these spaces in the infinite pyramid coordinate system using spaces of $k$-weighted polynomials, $Q_k^{[l,m]}$, which we define in terms of basis functions $\frac{x^ay^b}{(1+z)^c}$ where $a$,$b$ and $c$ are non-negative integers.
\begin{align}\label{Qtensordef}
Q_k^{[l,m]} = \spn{\frac{x^ay^b}{(1+z)^c}\;: \quad c \le k, \; a \le c+l-k, \; b \le c+m-k }.
\end{align}
These spaces can be characterised via a decomposition into spaces of exactly $r$-weighted polynomials,
\begin{align}\label{Qtensoralternative}
Q_k^{[l,m]} = \bigoplus_{r=0}^k Q_r^{r+l-k,r+m-k,0}.
\end{align}
It is also helpful to observe that $\frac{x^ay^b}{(1+z)^c} \mapsto \xi^a\eta^b(1-\zeta)^{c-a-b}$ under the coordinate transformation, $(\eta,\xi,\zeta) = \phi(x,y,z)$ given by \eqref{projmap}.  So if the representation in the infinite pyramid coordinate system of some polynomial $f(\hat x)$ is $\tilde f \in Q_k^{[l,m]}$ then $f$ is at most degree $k$ in $(\xi,\eta,\zeta)$ and at most degree $l$ and $m$ in $(\xi, \zeta)$ and $(\eta,\zeta)$ respectively.

Now define the spaces $\PU{s}_k$ as
\begin{subequations}
\begin{align}
\PU{0}_k &= Q_k^{[k,k]}, \label{rk0}\\
\PU{1}_k &= \left(Q_{k+1}^{[k-1,k]} \times Q_{k+1}^{[k,k-1]} \times \{0\} \right) \oplus \{\nabla u : u \in Q_k^{[k,k]} \}, \label{rk1} \\
\PU{2}_k &= \left( \{0\} \times  \{0\} \times Q_{k+2}^{[k-1,k-1]} \right) \oplus \left\{\nabla \times u : u \in  \left(Q_{k+1}^{[k-1,k]} \times Q_{k+1}^{[k,k-1]} \times \{0\} \right) \right\}, \label{rk2}\\
\PU{3}_k &= Q_{k+3}^{[k-1,k-1]}\label{rk3}.
\end{align}
\end{subequations}
The decomposition in the definitions means that  exterior derivatives, $d:\PU{s}_k \rightarrow \PU{s+1}_k$ are well defined.  The gradient is injective on $Q^{[k,k]} / \RR$; the curl is injective on $\left(Q_{k+1}^{[k-1,k]} \times Q_{k+1}^{[k,k-1]} \times \{0\} \right)$ and the divergence is a bijection from $\left( \{0\} \times  \{0\} \times Q_{k+2}^{[k-1,k-1]} \right)$ to  $Q_{k+3}^{[k-1,k-1]}$, so the sequence,
\begin{align*}
\xymatrix {
\RR \ar[r] & \PU{0}_k \ar[r]^{\nabla} & \PU{1}_k  \ar[r]^{\nabla \times} & \PU{2}_k  \ar[r]^{\nabla \cdot} & \PU{3}_k \ar[r] & 0
}
\end{align*}
is exact.  The following three lemmas relate these spaces to the $\UU{s}_k$ spaces.  To avoid the proofs distracting from our main argument we have postponed them to Appendix \ref{appproperties}.

\begin{lemma}\label{rsubsetu}
The spaces $\PU{s}_k$ are subspaces of the $\OUU{s}_k$:
\begin{align*}
\PU{s}_k \subseteq \OUU{s}_k \quad \forall s \in \{0,1,2,3\}. 
\end{align*}
\qed
\end{lemma}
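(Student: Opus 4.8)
The plan is to prove the four inclusions $\PU{s}_k\subseteq\OUU{s}_k$ one value of $s$ at a time, using two separate mechanisms.  The ``bottom piece'' of each space --- all of $\PU{0}_k$ and $\PU{3}_k$, and the first summand of $\PU{1}_k$ and of $\PU{2}_k$ in \eqref{rk0}--\eqref{rk3} --- comes down to an elementary algebraic containment between $k$-weighted tensor-product polynomial spaces, which I would isolate as a claim: \emph{for nonnegative integers $l,m,n,k'$ with $n\ge\min(k',l,m)$ one has $Q_{k'}^{[l,m]}\subseteq Q_{k'}^{l,m,n}$.}  To see this, take a spanning function $x^ay^b/(1+z)^c$ of $Q_{k'}^{[l,m]}$, so that $c\le k'$, $a\le c+l-k'\le l$, $b\le c+m-k'\le m$, and, for the function to exist, $c\ge\max(0,k'-l,k'-m)$.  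Clearing the denominator and expanding binomially,
\begin{align*}
\frac{x^ay^b}{(1+z)^c}=\frac{x^ay^b(1+z)^{k'-c}}{(1+z)^{k'}}=\sum_{j=0}^{k'-c}\binom{k'-c}{j}\,\frac{x^ay^bz^j}{(1+z)^{k'}},
\end{align*}
and each summand lies in $Q_{k'}^{l,m,n}$ as soon as $j\le n$; since the largest exponent $j$ occurring (over all admissible $c$) is $k'-\max(0,k'-l,k'-m)=\min(k',l,m)$, the hypothesis is exactly what is needed.  A quick check of the indices then shows the claim gives $Q_{k+1}^{[k-1,k]}\subseteq Q_{k+1}^{k-1,k,k-1}$ and $Q_{k+1}^{[k,k-1]}\subseteq Q_{k+1}^{k,k-1,k-1}$ (the first two components of $\PU{1}_k$, cf.\ \eqref{hcurlspace}), $Q_{k+2}^{[k-1,k-1]}\subseteq Q_{k+2}^{k-1,k-1,k-1}$ (the third component of $\PU{2}_k$, cf.\ \eqref{hdivspace}), and $Q_{k+3}^{[k-1,k-1]}\subseteq Q_{k+3}^{k-1,k-1,k-1}=\OUU{3}_k$ (cf.\ \eqref{l2space}), so the case $s=3$ is done outright.

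The only place the claim fails is $\PU{0}_k=Q_k^{[k,k]}$, where $\min(k,k,k)=k>k-1$, and this single exception is the one point I expect to need a second thought --- it is precisely why $\OUU{0}_k$ in \eqref{h1space} carries the extra summand $\spn{\frac{z^k}{(1+z)^k}}$.  Rerunning the expansion with $k'=l=m=k$: every spanning function with $c\ge1$ contributes only terms $x^ay^bz^j/(1+z)^k$ with $j\le k-c\le k-1$, hence lies in $Q_k^{k,k,k-1}$; the sole exception is the spanning function that the constraints force to be the constant $1$ (the case $c=0$, whence $a=b=0$), and there $1=\sum_{j=0}^{k}\binom{k}{j}z^j/(1+z)^k$ contributes exactly the one stray term $z^k/(1+z)^k$.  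Therefore $\PU{0}_k\subseteq Q_k^{k,k,k-1}\oplus\spn{\frac{z^k}{(1+z)^k}}=\OUU{0}_k$.

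The derivative summands of $\PU{1}_k$ and $\PU{2}_k$ then cost nothing, since $d$ already maps $\OUU{s}_k$ into $\OUU{s+1}_k$ (the remark preceding \eqref{exterior}) and on proxy fields $d$ is $\nabla$ on $0$-forms and $\nabla\times$ on $1$-forms.  Indeed the second summand of $\PU{1}_k$ is $\nabla\PU{0}_k=d\,\PU{0}_k\subseteq d\,\OUU{0}_k\subseteq\OUU{1}_k$ by the $s=0$ inclusion just proved, and together with the first summand this gives $\PU{1}_k\subseteq\OUU{1}_k$; likewise the curl summand of $\PU{2}_k$ is $\nabla\times$ applied to the first summand of $\PU{1}_k$, which we have just put inside $\OUU{1}_k$, so it lands in $d\,\OUU{1}_k\subseteq\OUU{2}_k$, and together with $\{0\}\times\{0\}\times Q_{k+2}^{[k-1,k-1]}\subseteq\OUU{2}_k$ this gives $\PU{2}_k\subseteq\OUU{2}_k$.  (The case $s=3$ could instead be handled the same way, using that $\nabla\cdot$ carries the first summand of $\PU{2}_k$ onto $\PU{3}_k$.)  Beyond this index bookkeeping and the constant-function subtlety at $s=0$, I do not anticipate any real obstacle.
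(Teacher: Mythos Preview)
Your proposal is correct and follows essentially the same route as the paper: an elementary inclusion $Q_{k'}^{[l,m]}\subseteq Q_{k'}^{l,m,n}$ for the ``bottom pieces'', plus the fact that $d:\OUU{s}_k\to\OUU{s+1}_k$ for the derivative summands. The paper packages the $s=0$ exception into a single sharper inclusion $Q_n^{[l,m]}\subseteq Q_n^{l,m,\min\{l,m\}-1}+Q_n^{0,0,\min\{l,m\}}$ (stated without proof), whereas you prove the weaker $Q_{k'}^{[l,m]}\subseteq Q_{k'}^{l,m,\min(k',l,m)}$ and then handle the stray $z^k/(1+z)^k$ term by hand; these amount to the same computation and the difference is purely cosmetic.
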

In fact, for $k \ge 2$ the $\PU{s}_k$ are strict subsets of the  $\OUU{s}_k$.

For a given $s \in \{0,1,2,3\}$ and $k \ge 0$, we define the approximation space\footnote{c.f. the original approximation spaces, \eqref{underlying}} on a pyramid, $K$, as those differential forms whose infinite coordinate representation lie in $\PU{s}_k$: 
\begin{align*}
\PU{s}_k(K) = \left\{ u \in \Lambda^{(s)}(K) : \left(u_{\tilde \pii}\right) \in \PU{s}_k \right\}.
\end{align*}
These spaces still contain all the polynomials that were shown to be present in the $\UU{s}_k$ spaces.  Specifically:
\begin{lemma}\label{polynomials}
If $K$ is an affine (i.e. parallelogram-based) pyramid then, for $k \ge 1$,
\begin{align*}
&\PP{k} \subset \PU{0}_k(K) \\
&\left(\PP{k-1}\right)^{\binom{3}{s}} \subset \PU{s}_k(K) \qquad s \in \{1,2,3\}
\end{align*}
\qed
\end{lemma}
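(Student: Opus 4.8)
The plan is to reduce everything to a statement about the representations in the infinite pyramid coordinate system, where the spaces $\PU{s}_k$ are explicitly described by the $k$-weighted polynomial spaces $Q_k^{[l,m]}$, and then to use the explicit form of the weights $\w{s}_\phi$ from \eqref{weight0}--\eqref{weight3} to track how a polynomial of a given degree in reference coordinates pulls back. First I would recall the observation made just after \eqref{Qtensoralternative}: the monomial $\frac{x^ay^b}{(1+z)^c}$ corresponds under $\phi$ to $\xi^a\eta^b(1-\zeta)^{c-a-b}$ in reference coordinates, so if $\tilde f \in Q_k^{[l,m]}$ then $f$, viewed as a function of $(\xi,\eta,\zeta)$, is a polynomial of total degree at most $k$, of degree at most $l$ in $(\xi,\zeta)$ jointly, and of degree at most $m$ in $(\eta,\zeta)$ jointly. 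In particular $Q_k^{[k,k]}$ contains (the pullbacks of) all polynomials of total degree $\le k$ in $(\xi,\eta,\zeta)$.

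For the case $s=0$, this is almost immediate: by \eqref{rk0}, $\PU{0}_k = Q_k^{[k,k]}$, and a polynomial $p \in \PP{k}(K)$, when $K$ is an affine pyramid so that $\phi_K$ is affine, has reference-coordinate representation $\hat p \circ \phi_K^{-1}$ which is again a polynomial of degree $\le k$ in $(\xi,\eta,\zeta)$; composing with $\phi$ sends it into $Q_k^{[k,k]}$ by the degree bookkeeping above. Hence $p \in \PU{0}_k(K)$. The slightly subtle point is to check that an affine change of the reference pyramid does not increase the relevant "mixed" degrees beyond $k$; since affine maps preserve total degree this is fine, and the joint-degree constraints in $[k,k]$ are exactly total-degree constraints when $l=m=k$.

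For $s \in \{1,2,3\}$ I would argue component-by-component on the proxy fields. Take $p = (p_1,\ldots,p_{\binom{3}{s}})$ with each $p_i \in \PP{k-1}(K)$; I must show its infinite-pyramid proxy representation $\tilde p = (\w{s}_\phi)^{-1}\hat p$ lies in $\PU{s}_k$. The cleanest route is to use the de Rham structure of the definitions \eqref{rk1}--\eqref{rk3} rather than brute force. For $s=3$: by \eqref{rk3}, $\PU{3}_k = Q_{k+3}^{[k-1,k-1]}$, and $\w{3}_\phi = (1+z)^4$; a degree-$(k-1)$ polynomial in $(\xi,\eta,\zeta)$ pulls back via $\phi$ into $Q_{k-1}^{[k-1,k-1]}$ (just the total-degree count), and multiplying by $(1+z)^4$ — i.e. by $(1-\zeta)^{-4}$ in reference coordinates, which is what the weight does to the $0$-form density — keeps us inside $Q_{k+3}^{[k-1,k-1]}$ by the explicit span description \eqref{Qtensordef}. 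For $s=2$ and $s=1$ I would similarly verify that $(\PP{k-1})^3$ sits inside the ``first summand plus range-of-derivative'' decomposition: a constant-coefficient polynomial vector field of degree $k-1$ can be split using the exact sequence (any such field is a gradient/curl of a polynomial potential of degree $k$ plus a ``complementary'' piece landing in the explicit first factor), and one checks both pieces pull back correctly using \eqref{jacinverse}--\eqref{jac}; the $(1+z)$ and $x,y$ entries in those weight matrices are exactly accounted for by the shifts $k-1 \mapsto k, k+1, k+2$ in the indices of the $Q$-spaces.

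The main obstacle is the bookkeeping for $s=1$ and $s=2$: one has to confirm that after applying $\w{1}_\phi$ or $\w{2}_\phi$ (which mixes components and introduces factors of $x$, $y$, $(1+z)$) to a polynomial proxy field, each resulting component still satisfies the asymmetric joint-degree bounds $[k-1,k]$ versus $[k,k-1]$ built into \eqref{rk1}--\eqref{rk2}. I expect this to work out precisely because those spaces were designed (in \cite{phillips:pyramid} and here) so that the weights map polynomial proxies to the right places, but making the degree-counting airtight for the off-diagonal weight entries — and checking the gradient/curl summands absorb exactly the terms the first factors miss — is where the real care is needed. Since the excerpt explicitly postpones this lemma to the appendix, I would present the argument as: (i) the $s=0$ case directly; (ii) reduce $s=1,2,3$ to the $s=0$ case plus the exactness of the sequence $\RR \to \PU{0}_k \to \PU{1}_k \to \PU{2}_k \to \PU{3}_k \to 0$ together with Lemma \ref{rsubsetu} and the analogous known statement $\left(\PP{k-1}\right)^{\binom{3}{s}} \subset \OUU{s}_k(K)$; and (iii) a short degree count on the weight matrices to close the gap.
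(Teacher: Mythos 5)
Your $s=0$ and $s=3$ cases coincide with the paper's argument, and your overall strategy for $s=1,2$ (split off a gradient/curl of a polynomial potential, then do a degree count on the remainder) is also the route the paper takes. However, there is a genuine gap: you leave the $s=1,2$ verification at ``I expect this to work out,'' and that is precisely the step where naive degree bookkeeping fails. For instance, the monomial proxy $(\xi^{a}\eta^{b}(1-\zeta)^{c},0,0)^t$ with $m=a+b+c\le k-1$ transforms to a field whose first infinite-pyramid component is $\frac{x^ay^b}{(1+z)^{m+1}}$; by \eqref{Qtensordef}, membership of this in $Q_{k+1}^{[k-1,k]}$ requires $a \le (m+1)+(k-1)-(k+1)=m-1$, which fails when $b=c=0$. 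The paper resolves this by subtracting the gradient $\frac{1}{m+1}\nabla\frac{x^{a+1}y^b}{(1+z)^{m+1}}$, after which the piece left in the first factor of \eqref{rk1} carries the coefficient $1-\frac{a+1}{m+1}$, which vanishes exactly in the borderline case $a=m$ (the observation that $(\xi^a,0,0)^t$ is an exact $1$-form); an analogous cancellation with coefficient $1-\frac{a+b+2}{m+2}$ is needed for $s=2$. Without exhibiting these cancellations, the claim that ``the weights map polynomial proxies to the right places'' is false term by term, so the proof is incomplete at its most delicate point.

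Your proposed reduction (ii) does not close this gap either: knowing $\left(\PP{k-1}\right)^{\binom{3}{s}} \subset \OUU{s}_k(K)$ together with $\PU{s}_k(K)\subseteq\OUU{s}_k(K)$ says nothing about membership in the \emph{smaller} space, and exactness of the sequence only identifies the image of $d$ inside $\PU{s}_k$; it cannot certify that a given polynomial field lies in $\PU{s}_k$ in the first place. The exact-sequence idea is usable only in the weaker form the paper employs: subtract $dq$ for a polynomial potential $q$ already known (from the lower-$s$ case) to lie in $\PU{s-1}_k(K)$ so as to normalize one component to zero, and then carry out the explicit monomial computation, including the cancellations above, on what remains.
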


Just as with the original spaces, $\UU{s}_k(K)$, the new spaces are compatible with Nedelec's elements, which were first outlined in \cite{nedelec:80:mfea}.

\begin{lemma}\label{rtrace}
Let $K$ be a pyramid.  For each $s\in \set{0,1,2}$ there is a trace operator that takes elements of $\cH^{(s)}(K)$ to some distribution on the boundary, $\partial K$.  The image of $\PU{s}_k(K)$ under this operator consists of all traces of elements of $\cH^{(s)}(K)$ whose restriction to each triangular or quadrilateral face of $K$ is the trace of a corresponding $k$th order Lagrange, edge and face approximation function on a neighbouring tetrahedron or hexahedron.
\qed
\end{lemma}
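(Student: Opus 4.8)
The plan is to reduce the statement to a face-by-face computation in the infinite pyramid coordinate system, and then to match the resulting spaces against the traces of the standard tetrahedral and hexahedral elements, using as a shortcut the analogous property for the spaces $\UU{s}_k$, which is established in \cite{phillips:pyramid}. Existence of the trace operator is immediate: by Lemma \ref{rsubsetu} we have $\PU{s}_k(K) \subseteq \OUU{s}_k(K)$, and by Lemma \ref{UUuniformity} the latter sits inside $\cH^{(s)}(K)$, so the usual trace theorems on Lipschitz domains supply the scalar, tangential, and normal traces for $s = 0, 1, 2$ as distributions on $\partial K$. Since $\phi_K$ is affine and the relevant tetrahedral and hexahedral spaces transform correctly under pullback, it suffices to characterise the image of the trace on the reference pyramid $\rp$; there the key device is the substitution $\frac{x^a y^b}{(1+z)^c} \mapsto \xix^a \xiy^b (1-\xiz)^{c-a-b}$ noted just before \eqref{rk0}, together with the degree bound $c \le k$ built into every $Q_k^{[l,m]}$.

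First I would prove that the trace of $\PU{s}_k(\rp)$ is contained in the space $T^{(s)}$ of admissible traces described in the statement. On the quadrilateral base $\{\xiz = 0\}$ the weights $\w{s}_\phi$ of \eqref{weight0}--\eqref{weight3} restrict to block-triangular matrices that fix the tangential components (for $s = 1$) and the normal component (for $s = 2$), so the base trace of $\PU{s}_k(\rp)$ agrees with that of $\OUU{s}_k(\rp)$, which already matches the $k$th order hexahedral trace by construction. On a triangular face the trace of $Q_k^{[l,m]}$ collapses, by the substitution and the bound $c \le k$, into the space of bivariate polynomials on that face of total degree $\le k$ (further constrained by $l$ or $m$ depending on which face is taken). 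Pushing this through the weight matrices for $s = 0, 1, 2$ --- the only subtle point being that the triangular faces of $\rp$ are slanted, so the tangential or normal projection mixes components --- one reads off exactly the $k$th order tetrahedral Lagrange, edge, or face trace space. Because $\PU{s}_k(\rp) \subset \cH^{(s)}(\rp)$, the traces on adjacent faces automatically agree along the shared edges, so the trace of $\PU{s}_k(\rp)$ lies in $T^{(s)}$; in particular $\PU{s}_k(\rp) \subseteq \UU{s}_k(\rp)$.

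For the reverse inclusion --- every element of $T^{(s)}$ is the trace of some function in $\PU{s}_k(\rp)$ --- I would combine $\PU{s}_k(\rp) \subseteq \UU{s}_k(\rp)$ with the fact, proved in \cite{phillips:pyramid}, that the trace of $\UU{s}_k(\rp)$ is exactly $T^{(s)}$; the claim then reduces to showing that cutting $\UU{s}_k$ down to $\PU{s}_k$ removes only zero-trace (interior) functions, so that the trace map stays surjective on $\PU{s}_k(\rp)$. Concretely, one peels the restrictions to the faces off a given element of $T^{(s)}$ one at a time: Lemma \ref{polynomials} supplies the polynomial members of $\PU{s}_k$ needed for the vertex and edge contributions, and the rational members of $\PU{s}_k$ --- together with, for $s = 1, 2$, the gradient and curl summands in \eqref{rk1}--\eqref{rk2} --- supply the face-interior bump functions, each chosen to vanish on the faces already handled (for the quadrilateral base one can take the infinite-coordinate representation $\frac{g(x,y)}{(1+z)^k}$, with $g$ a polynomial of bidegree $(k,k)$ vanishing on the edges of the base). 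A dimension count against the boundary degree-of-freedom counts of the tetrahedral and hexahedral elements then closes the argument; the $s = 1, 2$ cases can alternatively be reduced to $s = 0$ using the exactness of the $\PU{s}_k$ sequence and the commutation of $d$ with traces.

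The main obstacle is this last step: confirming that the smaller spaces $\PU{s}_k$ still contain, for every boundary degree of freedom, a function whose trace realises it and which vanishes on the remaining faces --- in particular that the rational bump functions needed on the slanted triangular faces really belong to the $Q_k^{[l,m]}$-type spaces after the weight matrices are applied, and that for $s = 1, 2$ the factors $Q_{k+1}^{[\cdot,\cdot]}$ and the gradient and curl summands jointly account for every Nedelec edge and face mode. The cleanest route is probably the dimension count, using the dimensions of the $\PU{s}_k$ read off from \eqref{Qtensoralternative} and \eqref{rk0}--\eqref{rk3} and the standard boundary degree-of-freedom counts; an explicit construction of the boundary shape functions, paralleling the one in \cite{phillips:pyramid} for $\UU{s}_k$, would also work but is more laborious.
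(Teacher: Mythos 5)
Your proposal matches the paper's own (sketch) proof in both structure and substance: containment is checked face by face via the monomial substitution $\frac{x^ay^b}{(1+z)^c}\mapsto\xix^a\xiy^b(1-\xiz)^{c-a-b}$ together with the weight matrices, and surjectivity is obtained by exhibiting boundary shape functions realising each external degree of freedom and counting them against the tetrahedral/hexahedral trace dimensions, reusing the constructions from \cite{phillips:pyramid} for $\UU{s}_k$. The paper leaves the same details unelaborated that you flag as the main obstacle, so there is nothing essentially different to compare.
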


Note that the original approximation spaces, $\UU{s}_k(K)$ were defined by explicitly identifying the subsets of underlying spaces, $\OUU{s}_k(K)$ which have polynomial trace spaces.  For the $\PU{s}_k(K)$, the polynomial trace property is inherent and this additional step is not required.  Lemma \ref{rtrace} is why we need rational functions in our spaces.  For example, there is no polynomial whose trace is the lowest order bubble on one triangular face and zero on all other faces.

From Lemmas \ref{rsubsetu} and \ref{rtrace} we see that:
\begin{corollary}\label{corsubspace}
The new approximation spaces are subspaces of the original approximation spaces.
\begin{align*}\PU{s}_k \subseteq \UU{s}_k.\end{align*} \qed
\end{corollary}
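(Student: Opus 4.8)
The plan is to combine the two preceding lemmas in the obvious way. Corollary \ref{corsubspace} asserts $\PU{s}_k \subseteq \UU{s}_k$; since both spaces are defined on a pyramid $K$ as those $s$-forms whose infinite-pyramid-coordinate representations lie in $\PU{s}_k$ and $\UU{s}_k$ respectively, it suffices to prove the inclusion of the coordinate model spaces $\PU{s}_k \subseteq \UU{s}_k$, and then transport it along the (invertible) coordinate transformation to get $\PU{s}_k(K) \subseteq \UU{s}_k(K)$.

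First I would recall that $\UU{s}_k$ is by definition the subspace of $\OUU{s}_k$ consisting of those forms whose traces on the triangular faces of the pyramid land in the trace space of the neighbouring tetrahedral element (the hexahedral-face condition being automatic). So to show $\PU{s}_k \subseteq \UU{s}_k$ I need two things: that $\PU{s}_k$ sits inside $\OUU{s}_k$, and that every element of $\PU{s}_k$ satisfies the trace constraint that cuts $\UU{s}_k$ out of $\OUU{s}_k$. The first is exactly Lemma \ref{rsubsetu}. The second is exactly the content of Lemma \ref{rtrace}, which states that the image of $\PU{s}_k(K)$ under the trace operator consists of traces that restrict on each triangular (or quadrilateral) face to the trace of a $k$th order Lagrange/edge/face function on a neighbouring simplex or hexahedron — precisely the defining property of $\UU{s}_k(K)$. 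Putting these together gives $\PU{s}_k(K) \subseteq \UU{s}_k(K)$ for each $s \in \{0,1,2,3\}$, hence $\PU{s}_k \subseteq \UU{s}_k$ at the level of model spaces as well.

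Since this is a two-line deduction from results already proved (and postponed to the appendix), there is no real obstacle here — the work has all been pushed into Lemmas \ref{rsubsetu} and \ref{rtrace}. The only point requiring a word of care is matching the trace condition: one must observe that the definition of $\UU{s}_k(K)$ as ``the functions in $\OUU{s}_k(K)$ with the correct polynomial traces on the triangular faces'' is literally the condition characterising the trace image described in Lemma \ref{rtrace}, so that membership in $\PU{s}_k(K)$ forces membership in $\UU{s}_k(K)$ once we know from Lemma \ref{rsubsetu} that we are already inside $\OUU{s}_k(K)$. For $s=3$ there is no triangular-face constraint at all, so the inclusion is immediate from Lemma \ref{rsubsetu} alone. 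A short remark at the end could note that, as recorded after Lemma \ref{rsubsetu}, these inclusions are strict for $k \ge 2$.
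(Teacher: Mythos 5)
Your proposal is correct and is exactly the argument the paper intends: the corollary is stated as an immediate consequence of Lemmas \ref{rsubsetu} and \ref{rtrace}, combining the inclusion $\PU{s}_k \subseteq \OUU{s}_k$ with the fact that the traces of $\PU{s}_k(K)$ satisfy the compatibility condition that defines $\UU{s}_k(K)$ inside $\OUU{s}_k(K)$. Your added remarks on the $s=3$ case and on strictness for $k\ge 2$ are accurate but not needed.
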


We can reuse the interpolation operators from the old spaces, \eqref{interpoperator},  to create interpolation operators for the new spaces.  Since the trace spaces of $\PU{s}_k$ are the same as $\UU{s}_k$, we just need to define projections  $\Xi^{(s)}_{k,K} : \UU{s}_k(K) \rightarrow \PU{s}_k(K)$ that do not change the trace data.

For $u \in \UU{s}_k(K)$, define $\Xi^{(s)}_{k,K}u$ to be the minimum of the functional $v \mapsto  \norm{d(v - u)}_0$ over the admissable set,\footnote{In the case $s=0$, $\PU{-1}_k$ should be interpreted as $\emptyset$}
\begin{align*}
 \left \{v \in \PU{s}_k{K} : u|_{\partial K} = v |_{\partial K} \text{ and } (u-v,dw) = 0 \:\;\forall w \in \PU{s-1}_k(K) \right \}.
 \end{align*}
Note that $\Xi^{(s)}_{k,K}$ is projection-based interpolation of $\UU{s}_k(K)$ onto $\PU{s}_k(K)$ and the well-posedness of this minimisation problem is established in \cite{demkowicz:projection}.  

Now define the maps $\Phi ^{(s)}_{k,K}: \cH^{(s),1/2+\epsilon}(K) \rightarrow \PU{s}_k(K)$ as 
\begin{align}\label{Phiinterp}
\Phi^{(s)}_{k,K} = \Xi^{(s)}_{k,K} \circ \Pi^{(s)}_{k,K}.
\end{align}  
Since both $ \Xi^{(s)}_{k,K}$ and $\Pi^{(s)}_{k,K}$ commute with $d$, so does $\Phi ^{(s)}_{k,K}$.  In fact, if  $\Pi^{(s)}_{k,K}$ is a projection based interpolant, then so is  $\Phi^{(s)}_{k,K}$.  

As with \eqref{globalapprox}, for a given $k$, we can assemble a global approximation space, 
\begin{align}\label{puglobalapprox}
\cS^{(s)}_h = \{ v \in \cH^{s}(\Omega) : v|K \in \PU{s}_k(K) \; \forall K \in \mathcal{T}_h\}
\end{align}
and define a global bounded interpolation operator $\Phi_h^{(s)}: \cH^{(s),1/2+\epsilon}(\Omega) \rightarrow \cS^{(s)}_h$ by $(\Phi_h^{(s)}u) |_K = \Phi^{(s)}_{k,K} (u|_K)$ for all $K \in \mathcal{T}_h$.  

We can construct a decomposition for these spaces that we can use with Theorem \ref{approxdecomp}.
\begin{definition} \label{defdecomp}
Given a pyramid, $K$ and $s\in \{0,1,2,3\}$ define, for each $r \ge 0$, the subspace of all the $s$-forms in $\PU{s}_k(K)$ whose components are exactly $r$-weighted when composed with $\phi:\ip \rightarrow \rp$.
\begin{align*}
\XU{s}{r,k}(K) = \left\{v \in \PU{s}_k(K) \;:\; v_{\hat \pii} \circ \phi \in Q^{r+1,r+1,0}_r \right\}.
\end{align*}  
\end{definition}
Note that although the domain of $v_{\hat \pii} \circ\phi$ is $\ip$, the condition is on the components in the reference coordinate system, $v_{\hat \pii}$, rather than the infinite pyramid coordinate system $v_{\tilde \pii}$.  In effect, what we are saying is that each $\XU{s}{r,k}(K)$ is spanned by $s$-forms whose components are linear combinations of functions 
\begin{align}\label{ebeta}
e(\xi,\eta,\zeta) = \xi^{a}\eta^{b}(1-\zeta)^{r - a - b}
\end{align}
where $a,b \le r+1$.  

\begin{lemma}\label{norms} For an affine pyramid, $K$ and for each $s \in \{0,1,2,3\}$ and $k \ge 1$, each of the spaces $\XU{s}{r,k}(K)$ satisfy the criterion for $V_r$ from Theorem \ref{approxdecomp}.  In fact,
\begin{align*}
\XU{s}{r,k}(K) \subset H^{r+1}\Lambda^{(s)}(K).
\end{align*}
Additionally, the semi-norm $\abs{\cdot}_{r,K}$ is actually a norm on each space $\XU{s}{r,k}(K)$.
\end{lemma}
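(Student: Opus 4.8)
The plan is to transfer everything to the reference pyramid $\rp$ and then work componentwise. Because we restrict to affine maps $\phi_K$, the Jacobian of $\phi_K$ is constant, so by \eqref{formtransform} the proxy components $v_i$ of $v$ on $K$ are $h$-independent constant-coefficient combinations of the reference components $v_{\hat\pii}\circ\phi_K^{-1}$ (and conversely); hence $v\in H^{r+1}\Lambda^{(s)}(K)$ iff every $v_{\hat\pii}\in H^{r+1}(\rp)$, and $\abs{v}_{r,K}=0$ iff $\abs{v_{\hat\pii}}_{H^r(\rp)}=0$ for all $\pii$. By Definition \ref{defdecomp} each $v\in\XU{s}{r,k}(K)$ has $v_{\hat\pii}\circ\phi\in Q^{r+1,r+1,0}_r$, so, using the observation that $\frac{x^ay^b}{(1+z)^r}\mapsto\xi^a\eta^b(1-\zeta)^{r-a-b}$, each $v_{\hat\pii}$ is a linear combination of the functions
\begin{align*}
e_{a,b}(\xi,\eta,\zeta):=\xi^a\eta^b(1-\zeta)^{r-a-b},\qquad 0\le a,b\le r+1.
\end{align*}
Both assertions then reduce to facts about the $e_{a,b}$, uniformly in $s$.

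First I would show $e_{a,b}\in H^{r+1}(\rp)$. Away from the apex $(0,0,1)$ the function $e_{a,b}$ is smooth, so (a point being negligible for Sobolev spaces in $\RR^3$) it suffices that its classical partials of order $\le r+1$ lie in $L^2(\rp)$. For $p\le a$ and $q\le b$ one has $\partial_\xi^p\partial_\eta^q\partial_\zeta^m e_{a,b}=c\,\xi^{a-p}\eta^{b-q}(1-\zeta)^{r-a-b-m}$ for a constant $c$ (the derivative vanishes otherwise), and the substitution $\xi=(1-\zeta)s$, $\eta=(1-\zeta)t$ — which maps $\rp$ onto the unit cube with Jacobian $(1-\zeta)^2$, exactly as in the proof of Lemma \ref{exactbaselemma} — factors $\int_{\rp}(\partial_\xi^p\partial_\eta^q\partial_\zeta^m e_{a,b})^2$ as $c^2\bigl(\int_0^1 s^{2(a-p)}\,ds\bigr)\bigl(\int_0^1 t^{2(b-q)}\,dt\bigr)\bigl(\int_0^1(1-\zeta)^{2(r-p-q-m)+2}\,d\zeta\bigr)$. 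The first two integrals converge since $p\le a$, $q\le b$, and the exponent in the last is $\ge 2(r-(r+1))+2=0$ as soon as $p+q+m\le r+1$, so that integral converges. Hence $e_{a,b}\in H^{r+1}(\rp)$, each $v_{\hat\pii}\in H^{r+1}(\rp)$, and $v\in H^{r+1}\Lambda^{(s)}(K)$; in particular $\XU{s}{r,k}(K)\subset\cH^{(s),r}(K)$, so it meets the hypothesis on $V_r$ in Theorem \ref{approxdecomp}.

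With the regularity in hand $\abs{\cdot}_{r,K}$ is finite on $\XU{s}{r,k}(K)$, and for the norm claim it remains to check it is definite, i.e. $\abs{v}_{r,K}=0\Rightarrow v=0$. If $\abs{v}_{r,K}=0$ then every weak order-$r$ derivative of each $v_{\hat\pii}$ vanishes on $\rp$; being smooth on the connected interior of $\rp$, $v_{\hat\pii}$ is therefore a polynomial of degree $\le r-1$. Now expand $v_{\hat\pii}=\sum_{a,b}c_{a,b}e_{a,b}$ in terms of the linearly independent family consisting of the monomials $\xi^a\eta^b\zeta^c$ ($c\ge0$) and the pure poles $\xi^a\eta^b(1-\zeta)^{-d}$ ($d\ge1$). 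When $a+b>r$, $e_{a,b}=\xi^a\eta^b(1-\zeta)^{-(a+b-r)}$ is exactly one such pure pole and occurs in no other $e_{a',b'}$, so polynomiality of $v_{\hat\pii}$ forces $c_{a,b}=0$ for every $a+b>r$. The surviving terms ($a+b\le r$) are polynomials of degree exactly $r$ whose degree-$r$ parts are the distinct monomials $(-1)^{r-a-b}\xi^a\eta^b\zeta^{r-a-b}$, and since $v_{\hat\pii}$ has degree $\le r-1$ these coefficients must vanish as well. Thus every $v_{\hat\pii}=0$, hence $v=0$.

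I expect the $H^{r+1}$ estimate to be the only real obstacle: one has to differentiate $e_{a,b}$ anisotropically and then recognise that the constraint $\xi,\eta\le1-\zeta$ built into the pyramid (equivalently, the substitution above) is precisely what lets the apex singularity survive $r+1$ derivatives in $L^2$. The bound $a,b\le r+1$ in Definition \ref{defdecomp} is calibrated so that one lands in $H^{r+1}$ and not in $H^{r+2}$ — which is exactly the ``room'' the later Bramble--Hilbert argument needs — while the partial-fraction bookkeeping in the norm part is routine once the linearly independent family above is written down.
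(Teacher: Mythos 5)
Your proof is correct and follows essentially the same route as the paper: reduce to the reference pyramid by affine equivalence, expand each component in the functions $\xi^a\eta^b(1-\zeta)^{r-a-b}$, and show the order-$(r+1)$ derivatives are square-integrable because the factor $(1-\zeta)^2$ from the shrinking cross-section (your substitution $\xi=(1-\zeta)s$, $\eta=(1-\zeta)t$) absorbs the apex singularity. Your definiteness argument via partial fractions and leading homogeneous parts is in fact more complete than the paper's, which only observes that each individual basis function has nonzero $r$-seminorm and leaves the linear-independence step implicit.
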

\begin{proof}
Let $u \in \XU{s}{r,k}(K)$.  Each $u_{\hat i}$ can be written in terms of functions, $e(\xi,\eta,\zeta) = \xi^{a}\eta^{b}(1-\zeta)^{r - a - b}$.  When $a+b > r$, these will be rational functions with a singularity at $\zeta = 1$.  We need to understand their differentiability on the finite pyramid.  Let $\gamma = (\gamma_1,\gamma_2,\gamma_3)$ be a multi-index.  The partial derivative,
\begin{align*}
\pd{^\gamma e}{\hat{x}^\gamma} = C \xi^{a-\gamma_1}\eta^{b-\gamma_2}(1-\zeta)^{r - b - a - \gamma_3}
\end{align*}
where $C = C(\gamma,a,b,r)$ is a (possibly zero) constant dependent only on $\gamma$, $a$, $b$ and $r$.  Hence 
\begin{align}
\int_{\hat{K}} \left(\pd{^\gamma e}{\hat{x}^\gamma}\right)^2 &= C \int_0^1 \int_0^{1-\zeta} \int_0^{1-\zeta}  \xi^{2a-2\gamma_1}\eta^{2b-2\gamma_2}(1-\zeta)^{2r - 2b - 2a - 2\gamma_3} d\xi d\eta d\zeta\\
&=C \int_0^1 (1-\zeta)^{2(r+1-\gamma_1-\gamma_2-\gamma_3)} d\zeta
\end{align}  
This integral is finite if $r+1 - \abs{\gamma} > -1/2$, so $e \in H^{\lfloor r + 3/2-\epsilon \rfloor}(\hat{K})$.  By affine equivalence of $K$ and $\rp$, $u \in H^{\lfloor r + 3/2-\epsilon \rfloor}(K) \subset H^{r+1}(K)$.  

Finally, \eqref{ebeta} shows that each $e(\xi,\eta,\zeta)$ is either a rational function, or a polynomial of degree exactly $r$, so $\abs{e}_{r,\hat K} \neq 0$.  Hence $\abs{u}_{r,K} \neq 0$ and $\abs{\cdot}_{r,K}$ is a semi-norm on $\XU{s}{r,k}(K)$. 
\end{proof}

\begin{lemma} \label{decomp}For an affine pyramid, $K$ and for each $s \in \{0,1,2,3\}$ and $k \ge 1$, each of the spaces $\PU{s}_k(K)$ may be decomposed:
\begin{align*}
\PU{s}_k(K) = \XU{s}{0,k}(K)\oplus \cdots \oplus \XU{s}{k,k}(K)
\end{align*}
\end{lemma}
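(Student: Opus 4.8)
The plan is to establish the three ingredients bundled into the statement: the inclusion $\PU{s}_k(K) \subseteq \sum_{r=0}^k \XU{s}{r,k}(K)$, the reverse inclusion, and directness of the sum. The reverse inclusion is free, since each $\XU{s}{r,k}(K)\subseteq\PU{s}_k(K)$ by Definition \ref{defdecomp}. Directness I would dispose of uniformly in $s$: if $\sum_{r=0}^k v_r = 0$ with $v_r\in\XU{s}{r,k}(K)$, then for each proxy index $i$ the functions $(v_r)_{\hat i}\circ\phi\in Q_r^{r+1,r+1,0}$ sum to zero; writing $(v_r)_{\hat i}\circ\phi = p_r(x,y)/(1+z)^r$ and clearing denominators by $(1+z)^k$ turns this into $\sum_r p_r(x,y)(1+z)^{k-r}=0$, and comparing coefficients of powers of $z$ yields a triangular linear system (the Pascal matrix $[\binom{k-r}{j}]$, which is nonsingular after reordering rows) forcing every $p_r=0$, hence every $v_r=0$. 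This is the same computation that underlies \eqref{Qtensoralternative}.

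It then remains to decompose a given $u\in\PU{s}_k(K)$, which I would do case by case using the weights \eqref{weight0}--\eqref{weight3} and the grading \eqref{Qtensoralternative}. The two scalar cases are short. For $s=0$, the weight $\w{0}_\phi=1$ gives $u_{\hat 1}\circ\phi = u_{\tilde 1}\in\PU{0}_k = Q_k^{[k,k]} = \bigoplus_{r=0}^k Q_r^{r,r,0}$, and each summand, being contained in $Q_r^{r+1,r+1,0}$, is a piece of $\XU{0}{r,k}(K)$. For $s=3$, the weight is the scalar $(1+z)^4$ and $\PU{3}_k = Q_{k+3}^{[k-1,k-1]} = \bigoplus_r Q_r^{r-4,r-4,0}$; multiplying the $r$-th summand by $(1+z)^4$ lands it in $Q_{r-4}^{r-4,r-4,0}\subseteq Q_{r-4}^{r-3,r-3,0}$, i.e. in $\XU{3}{r-4,k}(K)$, and after reindexing (negative indices contributing zero) all indices stay in $\{0,\dots,k\}$.

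For $s\in\{1,2\}$ I would use the splittings \eqref{rk1} and \eqref{rk2} of $\PU{s}_k$ into a tensor-product part and an exterior-derivative part. On the tensor-product factors $Q_{k+1}^{[k-1,k]}$, $Q_{k+1}^{[k,k-1]}$, $Q_{k+2}^{[k-1,k-1]}$, I would apply \eqref{Qtensoralternative} to split into exactly-$r$-weighted pieces and then push each piece through the polynomial matrix weight $\w{1}_\phi$ of \eqref{jacinverse} or $\w{2}_\phi$ of \eqref{jac}; since these weights only multiply the numerator and lower the $(1+z)$-power, an $r$-piece is carried into a single graded level, and a short degree count places its reference-coordinate components in the corresponding $Q_{r'}^{r'+1,r'+1,0}$, hence in $\XU{s}{r',k}(K)$. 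The exterior-derivative part of $\PU{1}_k$ is $d\PU{0}_k(K)$, and that of $\PU{2}_k$ is $d$ applied to the tensor-product part of $\PU{1}_k$; since $d$ commutes with the coordinate change and both of these domains have already been decomposed, it suffices to check that $d$ sends an exactly-$r$-weighted form to an exactly-$(r-1)$-weighted one whose components again lie in $Q_{r-1}^{r,r,0}$. That follows by differentiating the monomials $\xi^a\eta^b(1-\zeta)^{r-a-b}$ and reading off degrees, with Lemma \ref{norms} guaranteeing the differentiation is licit and the well-definedness of $d:\PU{s}_k\to\PU{s+1}_k$ guaranteeing the images stay in $\PU{s+1}_k(K)$.

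The main obstacle is getting these degree bounds exactly right in the $s=1$ and $s=2$ cases: after applying a matrix weight, and especially after differentiating, one must confirm the components land in $Q_r^{r+1,r+1,0}$ and in nothing larger. It is the $\partial_\zeta$-derivative of the third component of a $1$-form that pushes the numerator degree up to $r+1$, which is precisely why Definition \ref{defdecomp} builds in the ``$+1$'' of slack — tight enough that the grading is still respected, loose enough to absorb the derivative terms — and this is also the structural reason the new spaces $\PU{s}_k$, rather than the original $\OUU{s}_k$, are what make the argument go through.
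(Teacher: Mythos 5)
Your proposal is correct and follows essentially the same route as the paper's proof: the trivial reverse inclusion, then a case-by-case verification over $s\in\{0,1,2,3\}$ that each exactly-$r$-weighted piece from the grading \eqref{Qtensoralternative}, pushed through the explicit weights \eqref{weight0}--\eqref{weight3}, lands in a single $\XU{s}{r',k}(K)$. The only notable differences are that you verify directness of the sum explicitly (the paper leaves this implicit in the linear independence of the monomials $x^ay^b(1+z)^{-c}$) and that you treat the exact parts of $\PU{1}_k$ and $\PU{2}_k$ by differentiating in reference coordinates rather than by the paper's direct computation of $\w{1}_\phi\nabla p$ and $\w{2}_\phi(\nabla\times\tilde v)$; both are sound.
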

\begin{proof}The decomposition \eqref{Qtensoralternative} makes the claim look plausible.  The details are left to Appendix \ref{appproperties}.
\end{proof}

\section{The effect of numerical integration on the pyramid}\label{sectionpyramid}
We are ready to assemble all this machinery to prove a version of Theorem \ref{ciarlet} for pyramidal finite elements.  The first step is to establish an error estimate for each of the spaces in the decompositions in terms of the reference norms.  Recall that in \eqref{Kkquad} we defined $S_{k,K}(\cdot)$ as the $k$th order quadrature scheme for the pyramid, $K$, and that we call the error functional for this scheme $E_{k,K}(\cdot)$.  We will also use the pointwise representation, $A(u,v) = A^{ij}u_iv_j$ given in \eqref{tensorrepresentation}.
\begin{lemma} \label{vrestimate}
For any $s \in \set{0,1,2,3}$ and an affine pyramid, $K$, let $v \in \XU{s}{r,k}(K)$, $w \in \PU{s}_k(K)$  and $A \in W^{k+1,\infty}\Theta^{(s)}(K)$.  Then the error in the evaluation of the bilinear form, $\inner{v,w}_{A,K}$ using the scheme $\QS_{k, K}(\cdot)$ can be bounded in terms of the reference (semi-)norms
\begin{align} \label{localestimate}
&\abs{E_{k,K}(A(v,w))} \le C \abs{D\phi_K} \abs{A}_{k+1,\infty,\hat{K}} \abs{v}_{r,\hat{K}} \norm{w}_{0,\hat{K}}
\end{align}
where $C = C(k)$ is a constant that depends only on $k$.
\end{lemma}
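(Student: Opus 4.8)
The plan is to reduce the estimate on $K$ to an estimate on the reference pyramid $\rp$ via the change-of-variables formulas \eqref{Kkquad} and \eqref{Etransform}, and then to prove a \emph{reference} estimate of the form $\abs{E_{k,\rp}(\hat A(\hat v, \hat w))} \le C \abs{\hat A}_{k+1,\infty,\rp} \abs{\hat v}_{r,\rp} \norm{\hat w}_{0,\rp}$ by a Bramble--Hilbert argument applied in the $\hat v$-variable. First I would record that $E_{k,K}(A(v,w)) = E_{k,\rp}(\abs{D\phi_K}\widehat{A(v,w)})$, and that since $\abs{D\phi_K}$ is a constant (the map is affine) it factors out of the error functional, leaving $\abs{D\phi_K}\,E_{k,\rp}(\hat A^{\hat i\hat j}\hat v_{\hat i}\hat w_{\hat j})$; this already accounts for the $\abs{D\phi_K}$ prefactor in \eqref{localestimate}. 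Here $\hat A$, $\hat v$, $\hat w$ denote the reference-coordinate components, which by construction lie in the spaces $\OUU{s}_k(\hat K)$, $\XU{s}{r,k}(\hat K)$ etc.

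The core is the reference estimate. Fix $\hat w \in \PU{s}_k(\hat K)$ and $\hat A \in W^{k+1,\infty}\Theta^{(s)}(\hat K)$, and consider the linear functional $\ell_{\hat w, \hat A}(\hat v) := E_{k,\rp}(\hat A^{\hat i\hat j}\hat v_{\hat i}\hat w_{\hat j})$ acting on $\hat v \in \XU{s}{r,k}(\hat K)$. I would first check that $\ell_{\hat w,\hat A}$ extends to a bounded functional on $H^{r}\Lambda^{(s)}(\hat K)$: by Lemma \ref{norms}, $\XU{s}{r,k}(\hat K) \subset H^{r+1}\Lambda^{(s)}(\hat K)$ and is finite-dimensional, so on this space $\abs{\cdot}_{r,\hat K}$ is a norm; the quadrature $\QS_{k,\rp}$ and the integral are both bounded on $C(\hat K) \supset H^{r+1}\Lambda^{(s)}(\hat K)$ (Sobolev embedding on the $3$-dimensional $\hat K$, using $r+1 \ge 1$ once $r \ge 0$ together with the explicit regularity $H^{\lfloor r+3/2-\epsilon\rfloor}$ from Lemma \ref{norms}), with the operator norm controlled by $\norm{\hat A}_{0,\infty,\hat K}\norm{\hat w}_{0,\hat K}$ after a discrete/continuous Cauchy--Schwarz. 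Next comes the vanishing step: I claim $\ell_{\hat w,\hat A}(\hat v) = 0$ whenever $\hat v$ (more precisely each component $\hat v_{\hat i}$) is a polynomial of degree $\le k$. Indeed, in that case $\hat v_{\hat i}\circ\phi$ has the form $\hat v_{\hat i}(\xi,\eta,\zeta)$ with $\xi^a\eta^b(1-\zeta)^{c-a-b}$-type entries lying in $Q_{k}^{k,k,k}$, while $\hat w_{\hat i}\circ\phi \in Q_k^{k,k,k}$ by Lemma \ref{UUuniformity} and $\hat A^{\hat i\hat j}$ has $W^{k+1,\infty}$ entries — but this is \emph{not} enough to land in $Q_{2k}^{2k,2k,2k}$ because of $\hat A$. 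So instead I would freeze $\hat A$: write $\hat A = \hat A(\hat x_0) + (\hat A - \hat A(\hat x_0))$, use that for the constant part $\hat A(\hat x_0)$ the integrand $\hat A(\hat x_0)^{\hat i\hat j}\hat v_{\hat i}\hat w_{\hat j}$ lies in $Q_{2k}^{2k,2k,2k}$ (product of two $Q_k^{k,k,k}$'s, polynomial of reference degree $\le 2k$), hence is integrated exactly by Lemma \ref{exactbaselemma}; and handle the remaining "variable part of $\hat A$" by a separate Bramble--Hilbert argument on $\hat A$. The cleanest organization is therefore a \emph{double} Bramble--Hilbert: expand $\hat A$ about a Taylor polynomial of degree $k$ and $\hat v$ about a polynomial of degree $r$, so the only surviving terms are those where the $\hat A$-remainder contributes its full $k+1$ derivatives (giving $\abs{\hat A}_{k+1,\infty,\hat K}$) or the $\hat v$-remainder contributes its full $r$ derivatives (giving $\abs{\hat v}_{r,\hat K}$); the cross terms and lower-order terms vanish by exactness, Lemma \ref{exactbaselemma}, applied to the polynomial pieces paired with $Q_k^{k,k,k}$ functions whose product still has reference degree $\le 2k$.

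Concretely: apply the Bramble--Hilbert lemma (Theorem \ref{bhlemma}) to the map $\hat v \mapsto \ell_{\hat w,\hat A}(\hat v)$, which kills $\PP{r}(\hat K)$ in the $\hat v$-slot once we have subtracted off the analogous polynomial-in-$\hat A$ piece; this yields $\abs{\ell_{\hat w,\hat A}(\hat v)} \le C\,\norm{\ell_{\hat w,\hat A}}' \,\abs{\hat v}_{r,\hat K}$ with $\norm{\ell_{\hat w,\hat A}}' \le C\norm{\hat A}_{k+1,\infty,\hat K}\norm{\hat w}_{0,\hat K}$, and a second Bramble--Hilbert pass in the $\hat A$-variable trims $\norm{\hat A}_{k+1,\infty}$ down to the seminorm $\abs{\hat A}_{k+1,\infty,\hat K}$ plus lower-order terms that are annihilated by exactness. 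Finally I reinstate the factor $\abs{D\phi_K}$ and pass from $\norm{\hat w}_{0,\hat K}$ on the reference pyramid to the reference norm notation $\norm{w}_{0,\hat K}$ (which by definition is $\sum_{\hat i}\norm{w_{\hat i}}_{L^2(\hat K)}$), obtaining exactly \eqref{localestimate}; note no $h$-powers appear because the whole computation takes place on $\hat K$, the $h$-scaling being deferred to the global assembly step. The main obstacle is the first vanishing step — verifying that after subtracting polynomial approximants in \emph{both} $\hat A$ and $\hat v$ the remaining "exact" terms really are in $Q_{2k}^{2k,2k,2k}$ (or $Q_{2k+1}^{2k+1,2k+1,2k+1}$) so that Lemma \ref{exactbaselemma} applies, and bookkeeping that the pairing $\hat v \cdot \hat w$ with $\hat w \in \PU{s}_k$ never raises the reference degree beyond what the quadrature integrates exactly; the regularity check that $\XU{s}{r,k}(\hat K) \subset H^{r+1}$ needed to even write down a bounded functional is supplied by Lemma \ref{norms} and is the reason the new spaces $\PU{s}_k$, rather than the original $\OUU{s}_k$, are used here.
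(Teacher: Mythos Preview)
Your reduction to the reference pyramid is correct, but the core argument diverges from the paper's and, as written, does not close.  The paper applies Bramble--Hilbert in the \emph{tensor} variable, not in $\hat v$: it fixes $v\in\XU{s}{r,k}(\hat K)$ and $w\in\PU{s}_k(\hat K)$, defines $G(B):=E_{k,\hat K}(B^{\hat i\hat j}v_{\hat i}w_{\hat j})$ on $W^{k-r+1,\infty}\Theta^{(s)}(\hat K)$, and bounds $\norm{G}$ by $C\abs{v}_{r,\hat K}\norm{w}_{0,\hat K}$ purely by equivalence of norms on the finite-dimensional spaces $\XU{s}{r,k}(\hat K)$ and $\PU{s}_k(\hat K)$ (Lemma~\ref{norms} guarantees $\abs{\cdot}_{r,\hat K}$ is a genuine norm on the former).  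The vanishing step is that $G(B)=0$ whenever $B$ is a polynomial tensor of degree at most $k-r$: since $v_{\hat i}\circ\phi\in Q_r^{r+1,r+1,0}$ by definition of $\XU{s}{r,k}$, $w_{\hat j}\circ\phi\in Q_k^{k,k,k}$ by Lemma~\ref{UUuniformity}, and $B^{\hat i\hat j}\circ\phi\in Q_{k-r}^{[k-r,k-r]}$, the product lies in $Q_{2k+1}^{2k+1,2k+1,2k+1}$ and Lemma~\ref{exactbaselemma} applies.  One Bramble--Hilbert in $B$ then gives the estimate.

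Your double Bramble--Hilbert scheme misidentifies the relevant polynomial degree and cannot be carried out as described.  Expanding $\hat A$ to degree $k$ (rather than $k-r$) fails: a degree-$k$ polynomial $\hat A$ paired with $\hat v\in\XU{s}{r,k}$ and $\hat w$ produces an integrand of weight $2k+r$ in infinite-pyramid coordinates, which is \emph{not} integrated exactly by $S_{k,\hat K}$ once $r\ge 2$.  Your Bramble--Hilbert pass in $\hat v$ requires extending $\ell_{\hat w,\hat A}$ to $H^r\Lambda^{(s)}(\hat K)$, but $\ell$ involves point evaluations and $H^r(\hat K)\not\subset C(\hat K)$ for $r\le 1$ in three dimensions; moreover, after subtracting a polynomial piece of $\hat A$, the remainder functional $\ell_{\hat w,R_A}$ still does not annihilate polynomial $\hat v$, since $R_A$ is not polynomial.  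The factor $\abs{v}_{r,\hat K}$ you are chasing does not come from a Bramble--Hilbert argument at all: it comes for free from norm equivalence on the finite-dimensional space $\XU{s}{r,k}(\hat K)$, and the only Bramble--Hilbert application needed is in $A$, at degree $k-r$.
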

\begin{proof}
We can transform the error functional onto the reference pyramid using \eqref{Etransform}.  
\begin{align}\label{Escale}
E_{k,K}(A(v,w)) = E_{k,K}\left(A^{\pii\pij} v_\pii w_\pij\right) = E_{k,\hat K}\left(\abs{D\phi}A^{\hat \pii \hat \pij} v_{\hat \pii} w_{\hat \pij}\right) = \abs{D\phi} E_{k,\hat K}\left(A^{\hat \pii \hat \pij} v_{\hat \pii} w_{\hat \pij}\right).
\end{align}
We are able to take $\abs{D\phi_K}$ outside the integral because $\phi_K$ is affine.  Define the linear functional $G \in W^{k-r+1,\infty}\Theta^{(s)}(\hat K)'$ as 
\begin{align}\label{gdef}
G(B) = E_{k,\hat K}\left(B^{\hat \pii \hat \pij} v_{\hat \pii} w_{\hat \pij}\right)\quad \forall B \in W^{k-r+1,\infty}\Theta^{(s)}(\hat K).
\end{align}
Since $\QS_k(\cdot)$ takes point values of its argument, 
\begin{align*}
\abs{G(B)} \le C\norm{B^{\hat \pii \hat \pij} v_{\hat \pii} w_{\hat \pij}}_{\infty, \hat K} \le C\norm{B}_{k-r +1,\infty,\rp} \norm{\hat v}_{\infty, \hat K} \norm{\hat w}_{\infty, \hat K}.
\end{align*}
Furthermore, all norms are equivalent on the finite dimensional spaces, $\XU{s}{r,k}(\hat K)$ and $\PU{s}_k(\hat K)$, and, by the last part of Lemma \ref{norms}, $\abs{\cdot}_{r,\hat K}$ is a norm for $\XU{s}{r,k}$.  So $G$ is continuous and $\norm{G} \le C\abs{\hat v}_{r,\hat K}\norm{\hat w}_{0,\hat K}$.  All of the equivalences of norms are done on the reference pyramid, so the constant, $C$ depends only on $k$ (in particular, it does not depend on $K$).  

From the definition of $\XU{s}{r,k}$, we know that each $v_{\hat \pii}\circ \phi \in Q_r^{r+1,r+1,0}$ and by Lemma \ref{UUuniformity} and Corollary \ref{corsubspace}, $w_{\hat \pij} \circ \phi \in Q_k^{k,k,k}$ for each $\hat \pij \in \prox{s}$.  Now suppose that $B$ is polynomial of degree $k-r$, i.e. each component, $B^{\hat \pii \hat \pij} \in \PP{k-r}$ for each $\hat \pii,\hat \pij \in \prox{s}$. Then $B^{\hat \pii \hat \pij} \circ \phi \in Q^{[k-r,k-r]}_{k-r}$.  We can assemble these facts to see that
\begin{align*}
\left(B^{\hat \pii \hat \pij} v_{\hat \pii} w_{\hat \pij} \right)\circ \phi = \left(B^{\hat \pii \hat \pij} \circ \phi \right)\left(v_{\hat \pii} \circ \phi \right)\left(w_{\hat \pij} \circ \phi \right)\in Q_{2k+1}^{2k+1,2k+1,2k+1}.
\end{align*}
So, by Lemma \ref{exactbaselemma}, the quadrature error, $E_{k, \hat K}\left(B^{\hat \pii \hat \pij} v_{\hat \pii} w_{\hat \pij} \right) = 0$.  Therefore, $\PP{k-r} \subset \ker G$ and we can apply Theorem \ref{bhlemma} (the Bramble-Hilbert Lemma) to obtain
\begin{align*}
\abs{G(A)} \le C\abs{A}_{k+1,\infty,\hat{K}} \abs{v}_{r,\hat{K}} \norm{w}_{0,\hat{K}} \quad \forall A \in W^{k-r+1,\infty}\Theta^{(s)}(\hat K)
\end{align*}
For some constant $C = C(k)$.  Substituting \eqref{gdef} and \eqref{Escale} gives the desired result.
\end{proof}

We can now apply a scaling argument to get an element-wise estimate on the quadrature error.  Recall that we defined the interpolation operator, $\Phi_K^{(s)}:\cH^{(s),1/2 + \epsilon}(K) \rightarrow \PU{s}_k(K)$ in \eqref{Phiinterp}.
\begin{lemma} 
Let $K$ be an affine pyramid satisfying the shape-regularity condition, \eqref{shaperegular}, for some $\rho \ge 1$.  Fix $s \in \{0,1,2,3\}$ and take $k \ge 2$.  Then
\begin{align}
&\forall u \in H^k\Lambda^{(s)}(K), w \in \PU{s}_k(K) \text{ and }A \in W^{k+1,\infty}\Theta^{(s)}(K)\\
&\abs{E_{k,K}(A(\Phi_{k,K}^{(s)}u, w))} \le \left(Ch^{k+1} + O(h^{k+2}) \right) \norm{A}_{k+1,\infty,K} \norm{u}_{k,K} \norm{w}_{0,K} \label{EAPhiestimate}
\end{align}
where $C = C(k)$ a constant dependent only on $k$.
\end{lemma}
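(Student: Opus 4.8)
The plan is to split the interpolant via the decomposition of Lemma~\ref{decomp}, reduce to the one-piece bound of Lemma~\ref{vrestimate}, and finish with a scaling argument in the spirit of Theorem~\ref{ciarlet}. First, since $k\ge 2$ we have $u\in H^{k}\Lambda^{(s)}(K)\subset\cH^{(s),k-1}(K)\subset\cH^{(s),1/2+\epsilon}(K)$, so $\Phi^{(s)}_{k,K}u$ is defined, and by Lemma~\ref{decomp} it decomposes uniquely as $\Phi^{(s)}_{k,K}u=v_{0}+\cdots+v_{k}$ with $v_{r}\in\XU{s}{r,k}(K)$. Bilinearity of $A(\cdot,\cdot)$ and linearity of $E_{k,K}$ give $E_{k,K}(A(\Phi^{(s)}_{k,K}u,w))=\sum_{r=0}^{k}E_{k,K}(A(v_{r},w))$, and Lemma~\ref{vrestimate} (applicable since $v_{r}\in\XU{s}{r,k}(K)$, $w\in\PU{s}_k(K)$, $A\in W^{k+1,\infty}\Theta^{(s)}(K)$) bounds each term by $C\abs{D\phi_{K}}\,\abs{A}_{k+1,\infty,\hat K}\,\abs{v_{r}}_{r,\hat K}\,\norm{w}_{0,\hat K}$. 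The whole problem is thereby reduced to controlling the reference seminorms $\abs{v_{r}}_{r,\hat K}$ and keeping track of the scaling.

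For the reference seminorms I would pass to $\hat K$. By affine equivalence of the elements, $\phi_{K}^{*}\Phi^{(s)}_{k,K}u=\Phi^{(s)}_{k,\hat K}\hat u$ for $\hat u=\phi_{K}^{*}u$, and the decomposition of Lemma~\ref{decomp} commutes with pullback, so $v_{r}$ pulls back to the $r$-th piece $\hat v_{r}\in\XU{s}{r,k}(\hat K)$ of $\Phi^{(s)}_{k,\hat K}\hat u$ and $\abs{v_{r}}_{r,\hat K}$ is a seminorm of $\hat v_{r}$ on the fixed pyramid $\hat K$. For $2\le r\le k$ I would invoke Theorem~\ref{approxdecomp} on $\hat K$ with $V_{r}=\XU{s}{r,k}(\hat K)$ (which lies in $H^{r}\Lambda^{(s)}(\hat K)$ by Lemma~\ref{norms}) and $\Pi=\Phi^{(s)}_{k,\hat K}$: the argument behind that theorem uses only boundedness of $\Pi$ from $H^{r}\Lambda^{(s)}(\hat K)$ and the presence of $\PP{r-1}$ in the range, both of which hold for $2\le r\le k$ since then $H^{r}\Lambda^{(s)}(\hat K)\subset\cH^{(s),1/2+\epsilon}(\hat K)$, and it yields $\abs{\hat v_{r}}_{r,\hat K}\le C\abs{\hat u}_{r,\hat K}$. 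For $r\in\{0,1\}$ the functions in $\XU{s}{r,k}(\hat K)$ are too rough for this (by Lemma~\ref{norms} only $H^{1}$, resp.\ $H^{2}$), so I would use a crude bound instead: the decomposition projection onto $\XU{s}{r,k}(\hat K)$ is bounded (finite dimensions) and $\Phi^{(s)}_{k,\hat K}$ is bounded on $\cH^{(s),1/2+\epsilon}(\hat K)\supset H^{k}\Lambda^{(s)}(\hat K)$, whence $\abs{\hat v_{r}}_{r,\hat K}\le C\norm{\hat u}_{k,\hat K}$. Every constant above is intrinsic to $\hat K$ and so depends only on $k$ and $s$.

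It remains to scale back. Using $\abs{A}_{k+1,\infty,\hat K}\le C\rho^{2s}h^{k+1-2s}\norm{A}_{k+1,\infty,K}$ from \eqref{hscalingA} (with $k+1$ in place of $k$), $\norm{w}_{0,\hat K}\le Ch^{s}\abs{D\phi_{K}}^{-1/2}\norm{w}_{0,K}$ and $\abs{u}_{r,\hat K}\le Ch^{r+s}\abs{D\phi_{K}}^{-1/2}\abs{u}_{r,K}$ from \eqref{hscalingu}, and $\norm{\hat u}_{k,\hat K}\le Ch^{s}\abs{D\phi_{K}}^{-1/2}\norm{u}_{k,K}$ (summing \eqref{hscalingu} and absorbing the extra powers $h^{j}$, $0\le j\le k$, into a constant since $h$ is bounded for a shape-regular family), the factor $\abs{D\phi_{K}}$ from Lemma~\ref{vrestimate} is exactly cancelled by the two factors of $\abs{D\phi_{K}}^{-1/2}$, and the powers of $h$ add up to give
\begin{align*}
\abs{E_{k,K}(A(v_{r},w))}\le
\begin{cases}
C\rho^{2s}h^{k+1}\,\norm{A}_{k+1,\infty,K}\,\norm{u}_{k,K}\,\norm{w}_{0,K}, & r\in\{0,1\},\\
C\rho^{2s}h^{k+1+r}\,\norm{A}_{k+1,\infty,K}\,\abs{u}_{r,K}\,\norm{w}_{0,K}, & 2\le r\le k.
\end{cases}
\end{align*}
Summing over $r$, using $\abs{u}_{r,K}\le\norm{u}_{k,K}$ and $h^{k+1+r}\le h^{k+2}$ for $r\ge 2$ and $h$ bounded, the two pieces $r\in\{0,1\}$ assemble into the stated $Ch^{k+1}$ term and the pieces $r=2,\dots,k$ into the $O(h^{k+2})$ term, with $C=C(k)$ once $\rho$ is regarded as fixed.

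The step I expect to be the main obstacle is precisely the handling of $v_{0}$ and $v_{1}$. Because $\XU{s}{0,k}$ and $\XU{s}{1,k}$ contain genuinely rational functions lying only in $H^{1}$, resp.\ $H^{2}$, while the degrees of freedom defining $\Phi^{(s)}_{k,K}$ demand roughly $H^{3/2+\epsilon}$ regularity, the sharp per-piece estimate \eqref{estimate1} of Theorem~\ref{approxdecomp} is unavailable for these indices; the crude substitute is only uniform in $h$ because it is carried out on the fixed reference pyramid, so the use of affine equivalence and of the reference (semi-)norms is essential rather than cosmetic here. This is also why the conclusion takes the two-scale form $Ch^{k+1}+O(h^{k+2})$ (the leading term not exhibiting optimal dependence on $u$) and why $k\ge 2$ is imposed. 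The rest — checking that the $\abs{D\phi_{K}}$-powers cancel and the $h$-powers bottom out at $h^{k+1}$ — is routine bookkeeping but must be done with care.
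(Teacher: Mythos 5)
Your proposal is correct and follows essentially the same route as the paper: decompose $\Phi^{(s)}_{k,K}u$ via Lemma~\ref{decomp}, bound each piece with Lemma~\ref{vrestimate}, control $\abs{v_r}_{r,\hat K}$ via Theorem~\ref{approxdecomp} for $r\ge 2$ and via crude boundedness of the interpolant on the fixed reference pyramid for small $r$, then scale back to $K$. The only (harmless) deviation is that the paper uses the sharper estimate \eqref{rplus1} for $r=1$ instead of the crude operator-norm bound, and it distributes the powers of $h$ slightly differently (keeping $\abs{A}_{k-r+1,\infty}$ so that every term is $O(h^{k+1})$), but both bookkeepings land on the stated $\bigl(Ch^{k+1}+O(h^{k+2})\bigr)$ form.
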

\begin{proof}
Use the decomposition given in Lemma \ref{decomp} to write 
\begin{align*}
\Phi_{k,K}^{(s)} u = v_0 + \dots + v_k \text{ where } v_r \in \XU{s}{r,k}(K).
\end{align*}
By Lemma \ref{vrestimate}, we know that for each $r \in \{0\dots k\}$,
\begin{align}
\abs{E_{k, K}(A(v_r, w))}  \le C\abs{D\phi_K} \abs{A}_{k-r+1,\infty,\hat{K}} \abs{v_r}_{r,\hat K}\norm{w}_{0,\hat{K}}.
\end{align}
The interpolation operator is bounded on $\cH^{(s),1/2+\epsilon}(K)$ which is a subset of $H^{3/2+\epsilon}\Lambda^{(s)}(K)$ so Theorem \ref{approxdecomp} is applicable with $\alpha > 3/2$.  Pick some $\alpha \in (3/2, 2]$ so that when $r \ge 2$ we can use the first estimate, \eqref{estimate1}, to obtain:
\begin{align}
\abs{E_{k, K}(A(v_r, w))}  \le C\abs{D\phi_K} \abs{A}_{k-r+1,\infty,\hat{K}} \abs{u}_{r,\hat K}\norm{\hat{w}}_{0,\hat{K}}.
\end{align}
Now apply the inequalities \eqref{hscalingu} and \eqref{hscalingA} to the semi-norms (and norm) on the right-hand side to obtain
\begin{align*}
\abs{E_{k, K}(A(v_r, w))} &\le C\abs{D\phi_K}h^{k-r+1-2s}\rho^{2s}\abs{A}_{k-r+1,\infty,K} \frac{h^{r+s}}{\abs{D\phi_K}^{1/2}}\abs{u}_{r, K}\frac{h^{s}}{\abs{D\phi_K}^{1/2}}\norm{\hat{w}}_{0,K} \\
&=C h^{k+1}\abs{A}_{k-r+1,\infty,K} \abs{u}_{r, K}\norm{w}_{0,{K}},
\end{align*}
where the generic constant, $C$ still depends only on $k$.  

When $r=1$, we can similarly apply the second estimate from Theorem \ref{approxdecomp} given in \eqref{rplus1} to obtain:
\begin{align}
\abs{E_{k,K}(A(v_1, w))} \le Ch^{k+1}\abs{A}_{k,\infty,K}\left(\abs{u}_{1,K} + h\abs{u}_{2,K}\right) \norm{w}_{0,K}.
\end{align}
For $r=0$, note that $\norm{v_0}_{0,\hat K} \le C\norm{u}_{3/2 + \epsilon,\hat K} \le C \left(\abs{u}_{0,\hat K} + \abs{u}_{1,K} + \abs{u}_{2,\hat K}\right)$, so
\begin{align}
\abs{E_{k,K}(A(v_0, w ))} \le Ch^{k+1}\abs{A}_{k+1,\infty,K}\left(\abs{u}_{0,K} + h\abs{u}_{1,K} + h^2\abs{u}_{2,K}\right) \norm{w}_{0,K}
\end{align} 

Summing over the $v_r$, we obtain \eqref{EAPhiestimate}.
\end{proof}

Summing these errors over each element gives an estimate for the global consistency error due to the numerical integration (we shall ignore the $O(h^{k+2})$ terms).  Recall that in \eqref{puglobalapprox} we defined the global approximation space, $\cS^{(s)}_h \subset \cH^{(s)}(\Omega)$.  
\begin{theorem}\label{globalerror}
Let $s\in \set{0,1,2,3}$, $k \ge 2$ and assume that $\cS^{(s)}_h$ is constructed using a shape regular mesh, $\cT_h$ and finite elements, $\PU{s}_k(K)$ for each $K \in \cT_h$.  Let $A \in W^{k+1,\infty}\Theta^{(s)}(\Omega)$ and $u \in H^{(s),k}(\Omega)$.  Then the interpolant $\Phi^{(s)}_hu \in \cS^{(s)}_h$ satisifies
\begin{align*}
\sup_{w_h \in \cS_h^{(s)}} \frac{\abs{(\Phi^{(s)}_h u, w_h)_{A,\Omega} - (\Phi^{(s)}_h u, w_h)_{A,h,k,\Omega}}}{\norm{w_h}_0} \le h^{k+1} \norm{A}_{k+1,\infty,\Omega} \norm{u}_{k,\Omega}
\end{align*}
Where we define $(v, w)_{A,h,k,\Omega} := \sum_{K \in \cT_h} \QS_{K,k}\left(A(v, w)\right)$.
\end{theorem}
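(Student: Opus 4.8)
The plan is to deduce the global consistency estimate from the element-wise bound \eqref{EAPhiestimate} by a straightforward summation argument, exactly paralleling the classical reduction in \cite{ciarlet:fem}. First I would fix an arbitrary $w_h \in \cS^{(s)}_h$ and write the difference of the exact and quadrature-based bilinear forms as a sum over elements,
\begin{align*}
(\Phi^{(s)}_h u, w_h)_{A,\Omega} - (\Phi^{(s)}_h u, w_h)_{A,h,k,\Omega} = \sum_{K \in \cT_h} E_{k,K}\bigl(A(\Phi^{(s)}_{k,K}(u|_K), w_h|_K)\bigr),
\end{align*}
using the definition of $(\cdot,\cdot)_{A,h,k,\Omega}$, the fact that $(\Phi^{(s)}_h u)|_K = \Phi^{(s)}_{k,K}(u|_K)$, that $w_h|_K \in \PU{s}_k(K)$, and the additivity of the exact integral over the mesh together with \eqref{Etransform}.

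Next I would apply the previous lemma, estimate \eqref{EAPhiestimate}, on each $K$ (discarding the $O(h^{k+2})$ contribution as announced), giving
\begin{align*}
\Bigl| E_{k,K}\bigl(A(\Phi^{(s)}_{k,K}(u|_K), w_h|_K)\bigr)\Bigr| \le C h^{k+1} \norm{A}_{k+1,\infty,K}\,\norm{u}_{k,K}\,\norm{w_h}_{0,K},
\end{align*}
with $C = C(k)$ independent of $K$ and $h$. Summing over $K$, bounding $\norm{A}_{k+1,\infty,K} \le \norm{A}_{k+1,\infty,\Omega}$ uniformly, and applying the discrete Cauchy--Schwarz inequality to the product of the $\norm{u}_{k,K}$ and $\norm{w_h}_{0,K}$ factors yields
\begin{align*}
\sum_{K \in \cT_h} \norm{u}_{k,K}\,\norm{w_h}_{0,K} \le \Bigl(\sum_{K} \norm{u}_{k,K}^2\Bigr)^{1/2}\Bigl(\sum_{K}\norm{w_h}_{0,K}^2\Bigr)^{1/2} \le \norm{u}_{k,\Omega}\,\norm{w_h}_{0,\Omega},
\end{align*}
so that dividing by $\norm{w_h}_0$ and taking the supremum over $w_h$ gives the claimed bound with the generic constant absorbed (as is done throughout the paper). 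One technical point worth a sentence: the norms $\norm{\cdot}_{k,\Omega}$ and $\norm{\cdot}_{0,\Omega}$ defined in the paper are sums rather than $\ell^2$-aggregates of component seminorms, but the same Cauchy--Schwarz step goes through componentwise, or one simply notes the equivalence of these with the $\ell^2$-assembled Sobolev norms on the fixed domain $\Omega$.

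The reduction itself is routine; the only real content is making sure the hypotheses of the element-wise lemma are met on every $K$. In particular one must check that $u \in H^{(s),k}(\Omega)$ restricts to $H^k\Lambda^{(s)}(K)$ on each element (immediate by restriction), that $A \in W^{k+1,\infty}\Theta^{(s)}(\Omega)$ restricts appropriately, and that $\Phi^{(s)}_{k,K}$ is well-defined on $u|_K$, which requires $u|_K \in \cH^{(s),1/2+\epsilon}(K)$ --- this follows since $H^k\Lambda^{(s)}(K) \subseteq \cH^{(s),1/2+\epsilon}(K)$ for $k \ge 2$. I do not anticipate a genuine obstacle here: all the difficulty has already been absorbed into Lemma \ref{vrestimate} and the scaling lemma that produced \eqref{EAPhiestimate}; this theorem is the bookkeeping step that packages those local estimates into the global form needed to feed into the classical Strang-type convergence argument recalled in Section \ref{sectionexisting}.
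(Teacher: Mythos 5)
Your argument is correct and is essentially identical to the paper's proof: decompose the difference into a sum of element-wise quadrature errors, apply the preceding lemma's bound \eqref{EAPhiestimate} on each $K$, use Cauchy--Schwarz over the elements, and divide by $\norm{w_h}_{0,\Omega}$. Your extra remarks on checking the hypotheses of the local lemma and on the sum-versus-$\ell^2$ form of the norms are sensible but do not change the substance.
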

\begin{proof}
Let $w_h \in \cS_h^{(s)}$.
\begin{align*}
\abs{(\Phi^{(s)}_h u, w_h)_{A,\Omega} - (\Phi^{(s)}_h u, w_h)_{A,h,k,\Omega}} &\le C\sum_{K \in \mathcal{T}_h} {E_{k,K}(A( \Phi_{k,K}^{(s)}u, w_h))} \\
&\le Ch^{k+1}\sum_{K \in \mathcal{T}_h} \norm{A}_{k+1,\infty,K} \norm{u}_{k,K} \norm{w_h}_{0,K} \\
&\le Ch^{k+1} \norm{A}_{k+1,\infty,\Omega}\left(\sum_{K \in \mathcal{T}_h}\norm{u}_{k,K}^2\right)^{1/2} \left(\sum_{K \in \mathcal{T}_h} \norm{w_h}_{0,K}^2\right)^{1/2}\\
&\le Ch^{k+1} \norm{A}_{k+1,\infty,\Omega} \norm{u}_{k,\Omega} \norm{w_h}_{0,\Omega} 
\end{align*}
Dividing through by $\norm{w_h}_{0,\Omega}$ gives the result.  
\end{proof}

In the proof of Lemma \ref{vrestimate}, the important condition for $w$ was that $w_{\hat \pij} \circ \phi \in Q_k^{k,k,k}$.  So, by Lemma \ref{UUuniformity}, we could equally well have taken $w \in \UU{s}_k(K)$.  Furthermore, $\cS^{(s)}_h \subset \cV^{(s)}_h$ means that $\Phi^{(s)}_hu \in \cV^{(s)}_h$.  Hence we have a consistency error estimate for the global approximation spaces $\cV^{(s)}_h$ based on the original elements:
\begin{corollary}
Under the same assumptions as Theorem \ref{globalerror}, let $\cV^{(s)}_h$ be constructed using finite elements based on the approximation spaces, $\UU{s}_k(K)$.  Then the interpolant $\Phi^{(s)}_hu$ satisifies
\begin{align*}
\sup_{w_h \in \cV_h^{(s)}} \frac{\abs{(\Phi_h u, w_h)_{A,\Omega} - (\Phi_h u, w_h)_{A,h,k,\Omega}}}{\norm{w_h}_0} \le h^{k+1} \norm{A}_{k+1,\infty,\Omega} \norm{u}_{k,\Omega}.
\end{align*}
\qed
\end{corollary}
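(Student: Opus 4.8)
The plan is to show that this corollary is a direct consequence of Theorem \ref{globalerror} together with the two inclusions $\cS^{(s)}_h \subset \cV^{(s)}_h$ and $\Phi^{(s)}_h u \in \cV^{(s)}_h$, plus the observation (already made in the paragraph preceding the corollary) that the only property of the second argument $w$ used in the proof of Lemma \ref{vrestimate} was that $w_{\hat \pii} \circ \phi \in Q_k^{k,k,k}$. First I would recall, via Lemma \ref{UUuniformity}, that every $w \in \UU{s}_k(K)$ satisfies exactly this condition, so the element-wise bound \eqref{localestimate} of Lemma \ref{vrestimate}, and hence the scaled estimate \eqref{EAPhiestimate}, hold verbatim with $w \in \PU{s}_k(K)$ replaced by $w \in \UU{s}_k(K)$. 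Consequently the element-wise quadrature-error bound used in the proof of Theorem \ref{globalerror} remains valid whenever the second argument lies in the larger space $\UU{s}_k(K)$.

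Next I would address the first argument: the interpolant. By Corollary \ref{corsubspace}, $\PU{s}_k(K) \subseteq \UU{s}_k(K)$ for every $K$, so the global spaces satisfy $\cS^{(s)}_h \subseteq \cV^{(s)}_h$; in particular the interpolant $\Phi^{(s)}_h u$, which by construction lies in $\cS^{(s)}_h$, also lies in $\cV^{(s)}_h$. Thus $\Phi^{(s)}_h u$ is a legitimate element of the approximation space built from the original elements, and its restriction $\Phi^{(s)}_{k,K}(u|_K) \in \PU{s}_k(K)$ still enjoys the decomposition of Lemma \ref{decomp}. The estimate \eqref{EAPhiestimate} therefore applies to each term $A(\Phi^{(s)}_{k,K}u, w_h)$ with $w_h|_K \in \UU{s}_k(K)$.

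Finally I would run the summation argument of Theorem \ref{globalerror} unchanged: for any $w_h \in \cV^{(s)}_h$, bound $|(\Phi_h u, w_h)_{A,\Omega} - (\Phi_h u, w_h)_{A,h,k,\Omega}|$ by $\sum_{K} |E_{k,K}(A(\Phi^{(s)}_{k,K}u, w_h))|$, apply \eqref{EAPhiestimate} on each $K$, then use Cauchy--Schwarz over the elements and shape regularity to collapse the sums into $h^{k+1}\norm{A}_{k+1,\infty,\Omega}\norm{u}_{k,\Omega}\norm{w_h}_{0,\Omega}$, and divide by $\norm{w_h}_{0,\Omega}$. There is essentially no obstacle here — the corollary is a bookkeeping statement — but the one point worth stating carefully is why $\Phi^{(s)}_{k,K}u$ remains decomposable via Lemma \ref{decomp} when viewed inside $\UU{s}_k(K)$; this is immediate because $\Phi^{(s)}_{k,K}$ maps into $\PU{s}_k(K)$ by definition \eqref{Phiinterp}, so nothing about the first argument actually changes, and only the enlarged freedom in the test function $w_h$ needs the remark above.
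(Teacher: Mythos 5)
Your proposal is correct and follows essentially the same route as the paper: the paper's own justification is precisely the observation that Lemma \ref{vrestimate} only uses $w_{\hat \pii}\circ\phi \in Q_k^{k,k,k}$ (supplied for $\UU{s}_k(K)$ by Lemma \ref{UUuniformity}), together with $\cS^{(s)}_h \subset \cV^{(s)}_h$ so that $\Phi^{(s)}_h u$ is unchanged, after which the summation argument of Theorem \ref{globalerror} goes through verbatim. Nothing to add.
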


The error estimate may be applied to more general bilinear forms because of the commutativity $d\circ \Pi_h^{(s)} = \Pi_h^{(s+1)}\circ d$.  For example, the consistency error for the elliptic bilinear form, \eqref{ellipticproblem}, is
\begin{align*}
\sup_{v \in \cS_h^{(0)}} \frac{\abs{a(\Phi_h^{(0)}u,v) - a_h(\Phi_h^{(0)}u,v)}}{\norm{v}_1} &\le \sup_{v \in \cS_h^{(0)} }\frac{(d\Phi_h^{(0)}u,dv)_{A,\Omega} - (d\Phi_h^{(0)}u,dv)_{A,h,k,\Omega}}{\norm{dv}_0} \\
&\le\sup_{w \in \cS_h^{(1)} }\frac{(\Phi_h^{(1)}du,w)_{A,\Omega} - (\Phi_h^{(1)}du,w)_{A,h,k,\Omega}}{\norm{w}_0}\\
&\le Ch^{k+1} \norm{A}_{k+1,\infty,\Omega} \norm{du}_{k,\Omega} \\
&< Ch^{k+1} \norm{A}_{k+1,\infty,\Omega} \norm{u}_{k+1,\Omega}.
\end{align*}

A final note: as with the classical theory, the error estimates decay like $O(h^{k+1})$ but these are emphatically not $hp$-estimates.  The degree, $k$ enters into the constants in several places, which is to be expected from arguments that rely on the Bramble-Hilbert Lemma. 

\section{Conclusion}
The conventional finite element wisdom is that a $k$th order method requires a $k$th order quadrature scheme.  We have shown that this is still true for some high order pyramidal finite elements, but that the non-polynomial nature of pyramidal elements requires some unconventional reasoning to justify the wisdom.  

In the process, we have demonstrated new descriptions of families of high order finite elements for the de Rham complex that satisfy an exact sequence property.  We will examine these elements in more detail in future work, but a couple of notes are worth recording here.
\begin{itemize}
\item The approximation spaces for the first family in the sequence, $\PU{0}_k(K)$ are the same as Zaglmayr's elements, as described in \cite{demkowicz2007computing}, and which \cite{bergotcohendurufle::pyramid} describes as optimal with respect to their dimension and compatibility with neighbouring elements.  
\item Lemma \ref{polynomials} shows that the $\PU{s}_k(K)$ spaces contain polynomials corresponding to the tetrahedron of the first type.  Zaglmayr has constructed pyramidal elements containing polynomials corresponding to both types of tetrahedron, but only those corresponding to the second type are presented in \cite{demkowicz2007computing}.  It would, clearly, be interesting to compare our $\PU{s}_k(K)$ spaces with the construction for the first type. 
\end{itemize}

\begin{appendices}

\section{Properties of the new approximation spaces, $\PU{s}_k$}\label{appproperties}
In this appendix, we have collected proofs of various Lemmas in Section \ref{newapproxspaces}.
\begin{proof}[Proof of Lemma \ref{rsubsetu}]
The inclusions
\begin{align}\label{Qinclusions}
Q_n^{[l,m]} &\subseteq \left(Q_n^{l,m,\min\{l,m\}-1 } + Q_n^{0,0,\min\{l,m\} }\right) \subseteq Q_n^{l,m,\min\{l,m\} }. 
\end{align}
can be verified from the definition, \eqref{Qtensordef}.  By the first inclusion, $Q_k^{[k,k]} \subseteq Q_k^{k,k,k-1} + Q_k^{0,0,k}$, which gives the $s=0$ case: $\PU{0}_k \subseteq \OUU{0}_k$.  

The $s=0$ result implies $\nabla \PU{0}_k \subseteq \nabla \OUU{0}_k$.  Thus, since $\nabla \OUU{0}_k \subset \OUU{1}_k$, we have $\nabla Q_k^{[k,k]} \subset \OUU{1}_k$, which establishes the result for the second space in the decomposition for $\PU{1}_k$, given in \eqref{rk1}.  To deal with the first space in this decomposition, apply \eqref{Qinclusions} and the definition of $\OUU{1}_k$ given in \eqref{hcurlspace}, to obtain
\begin{align*}
\left(Q_{k+1}^{[k-1,k]} \times Q_{k+1}^{[k,k-1]} \times \{0\} \right) \subseteq \left( Q_{k+1}^{k-1,k,k-1} \times Q_{k+1}^{k,k-1,k-1} \times \{0\} \right) \subset \OUU{1}_k.
\end{align*}

The $s=2$ case may be established similarly. The space $\PU{2}_k$ is defined via a decomposition into two spaces, \eqref{rk2}.  The second space in this decomposition can be seen to be a subset of $\OUU{2}_k$ by taking curls of the $s=1$ result.  The first space is dealt with by applying \eqref{Qinclusions} directly to the definitions.  

Another application of \eqref{Qinclusions} gives $\PU{3}_k =  Q_{k+3}^{[k-1,k-1]} \subseteq Q_{k+3}^{k-1,k-1,k-1} = \OUU{3}_k$.
\end{proof}
\begin{proof}[Proof of Lemma \ref{polynomials}]
Since $\PP{k}$ is preserved by affine transformation, we can work in the reference coordinate system, $\hat x$.  Recall the components of the proxy representation of some $u \in \Lambda^{(s)}(K)$ in this coordinate system are denoted $u_{\hat \pii}$, where $\hat \pii \in \prox{s}$.  We will need to show that if all the components, $u_{\hat \pii} \in \PP{k}$ (or, for $s=1,2,3$,  $\PP{k-1}$) then $u \in \PU{s}_k(K)$.  This is equivalent to showing $\tilde u \in \PU{s}_k$, which we will do using the transformation rule, \eqref{proxytransform}, along with the explicit weights associated with the coordinate change $\phi : \ip \rightarrow \hat K$ given in \eqref{weight0}-\eqref{weight3}.  

We start with the case $s=0$.  Let $\hat u \in \Lambda^{(0)}(K)$ be any polynomial, $\hat u(\xix,\xiy,\xiz) = \xix^a\xiy^b(1-\xiz)^c$ where $a+b+c \le k$.  Then 
\begin{align*}
\tilde u =  \left(\w{0}_\phi\right)^{-1}\hat u \circ \phi = \frac{x^ay^b}{(1+z)^{a+b+c}} \in Q_k^{[k,k]} = \PU{0}_k.
\end{align*}
Similarly, for $s=3$, take $\hat u \in \Lambda^{(3)}(K)$ as $\hat u(\xix,\xiy,\xiz) =  \xix^a\xiy^b(1-\xiz)^c$ for $a+b+c \le k-1$.  Then 
\begin{align*}
\tilde u = \frac{x^ay^b}{(1+z)^{a+b+c+4}} \in Q_{k+3}^{[k-1,k-1]} = \PU{3}_k.
\end{align*}
The $s=1$ case involves a little more work.  Let $u \in \Lambda^{(1)}(K)$ have polynomial components, $u_{\hat i} \in \PP{k-1}$.  We can find $q \in \Lambda^{(0)}(K)$ with representation $\hat q \in \PP{k}$ such that $v = u - \nabla q$ has third component (in reference coordinates), $v_{\hat 3} = 0$.  By the result for $s=0$, $q \in \PU{0}_k(K)$,  and so (by \eqref{rk1}) $\nabla q \in \PU{1}_k(K)$.  We need to show that $v \in \PU{1}_k(\hat K)$.  Both $v_{\hat 1}$ and  $v_{\hat 2}$ are in $P^{k-1}$.  Suppose first that $v_{\hat 1} = \xix^a\xiy^b(1-\xiz)^c$ where $m:= a+b+c \le k-1$ and $v_{\hat 2} = 0$.
\begin{align*}
\tilde v = \left(\w{1}_\phi\right)^{-1} \hat v \circ \phi &= \frac{1}{(1+z)^2}\col{1+z & 0 & 0 }{0 & 1+z & 0 }{-x & -y & 1 } \col{\frac{x^ay^b}{(1+z)^{m}}}{0}{0} \\
&=\col{\frac{x^ay^b}{(1+z)^{m+ 1}} }{0}{-\frac{x^{a+1}y^b}{(1+z)^{m + 2}}}  = \frac{1}{(1+z)^{m+1}}\col{\left(1 - \frac{a+1}{m+1}\right)x^{a}y^b }{-\frac{b}{m+1}x^{a+1}y^{b-1} }{0} +  \tfrac{1}{m+1}\nabla \frac{x^{a+1}y^b}{(1+z)^{m + 1}}.
\end{align*}
Compare this last expression with the definition, \eqref{rk1}, to determine that $\tilde v \in \PU{1}_k$.  Note that when $a=m$ (which includes the case $a=k-1$), the first term vanishes, because $b=0$ and $1-\frac{a+1}{m+1} = 0.$\footnote{In other words, $(\xi^a,0,0)^t$ is an exact 1-form.}  An identical calculation establishes the same result when $v_{\hat 1} = 0$ and  $v_{\hat 2} = \xix^a\xiy^b(1-\xiz)^c$. 

For $s=2$, the change of coordinates formula for $u \in \Lambda^{(2)}(K)$ is
\begin{align}\label{s2change}
\tilde u = \left(\w{2}_\phi\right)^{-1} \hat u \circ \phi = \frac{1}{(1+z)^3}\col{1 & 0 & x }{0 & 1 & y }{0 & 0 & 1+z } \col{u_{\hat 1} }{u_{\hat 2}}{u_{\hat 3}}\circ \phi 
\end{align}
Suppose that $u_{\hat 1} = \xix^a\xiy^b(1-\xiz)^c$ with $m:=a+b+c \le k-1$.  Apply \eqref{s2change} to see that the contribution to $\tilde u$ is $\left(\frac{x^ay^b}{(1+z)^{m+3}},0,0\right)^t$.  Let $p = \frac{1}{m+2}\frac{x^ay^b}{(1+z)^{m+2}} \in Q^{[k-1,k-1]}_{k+1}$ and observe that $\frac{x^ay^b}{(1+z)^{m+3}} = -\pd{p}{z}$ and $\pd{p}{x} = \frac{a}{m+2}\frac{x^{a-1}y^b}{(1+z)^{m+2}} \in Q^{[k-1,k-1]}_{k+2}$ (the case $b=m$ implies that $a=0$ and therefore $\pd{p}{x} = 0$, so the final inequality in \eqref{Qtensordef} is not violated).  Hence 
\begin{align*}
 \left(\w{2}_\phi\right)^{-1}\col{\xix^a\xiy^b(1-\xiz)^c}{0}{0} \circ \phi = \nabla \times \col{0}{p}{0} - \col{0}{0}{\pd{p}{x}} \in \PU{2}_k
\end{align*}
Polynomials in the second component can be dealt with similarly.  When $u_{\hat 3} = \xix^a\xiy^b(1-\xiz)^c$, the contribution to $\tilde u$ is $\left(\frac{x^{a+1}y^b}{(1+z)^{m+3}},\frac{x^ay^{b+1}}{(1+z)^{m+3}},\frac{x^ay^b}{(1+z)^{m+2}}\right)^t$.  Hence
\begin{align*}
 \left(\w{2}_\phi\right)^{-1}\col{0}{0}{\xix^a\xiy^b(1-\xiz)^c} \circ \phi = \nabla \times \frac{1}{m+2}\col{-\frac{x^{a}y^{b+1}}{(1+z)^{m+2}}}{\frac{x^{a+1}y^b}{(1+z)^{m+2}}}{0} + \col{0}{0}{\left(1 - \frac{a+b+2}{m+2}\right)\frac{x^ay^b}{(1+z)^{m+2}}} \in \PU{2}_k.
\end{align*}
Note that $\frac{x^ay^b}{(1+z)^{m+2}} \in Q^{[k-1,k-1]}_k$ unless $a=m$ or $b=m$, but in these cases, $\left(1-\frac{a+b+2}{m+2}\right) = 0$.\footnote{Just as earth-shattering, this is the observation that $(0,0,\xi^a)^t$ and $(0,0,\eta^b)^t$ are exact 2-forms.}
\end{proof}
\begin{proof}[Proof of Lemma \ref{rtrace}]
This can be proved in an identical manner to Lemma [ref] in \cite{phillips:pyramid} for the original spaces, $\UU{s}_k(K)$.  We will just give a sketch here.  First we need to show that the restrictions of the traces of the $\PU{s}_k(K)$ functions to each face lie in the trace spaces of the corresponding tetrahedral or hexahedal approximation spaces.  Secondly we need to show that any valid trace can be achieved by some member of $\PU{s}_k(K)$.  

Convenient definitions of the tetrahedral and hexahedral spaces may be found in \cite{monk:maxwell}.  As an illustration, observe that members of the $\PU{0}_k$ which are non-zero on the face $y=0$ of the infinite pyramid can be expressed in terms of monomials $\frac{x^a}{(1+z)^c}$, where $a+c \le k$, which map to $\xix^a\xiz^{k-a-c}$, which will span all polynomials of degree $k$ on the face $\xiy=0$ of the finite pyramid, which is precisely the trace space of the $k$th order Lagrange tetrahedron..  

The second step is proved by demonstrating a linearly independent set of shape functions with non-zero traces that is the same size as the set of external degrees of freedom.  This can be done by showing that it is possible to achieve the lowest order bubble on each face, edge and vertex of the pyramid.  The sets of shape functions presented for the $\UU{s}_k(K)$ in \cite{phillips:pyramid} also suffice for the $\PU{s}_k(K)$.
\end{proof}
\begin{proof}[Proof of Lemma \ref{decomp}]
Each $\XU{s}{r,k}$ is a subset of $\PU{s}_k$, so
\begin{align*}
\XU{s}{0,k}(K)\oplus \cdots \oplus \XU{s}{k,k}(K) \subset \PU{s}_k(K) 
\end{align*}
For the reverse inclusion, we will deal with each $s \in \set{0,1,2,3}$, in turn.  For every $s \in \{0,1,2,3\}$, the transformation rule, \eqref{proxytransform}, gives $\hat u \circ \phi = \w{s}_\phi \tilde u$.

For 0-forms, the weight in the change of coordinates formula $\w{0}_\phi$ is equal to $1$ so any $u \in \PU{s}_k(K)$ satisfies $\hat u \circ \phi = \tilde u \in \PU{0}_k = Q_k^{[k,k]}$. The decomposition, \eqref{Qtensoralternative} gives
\begin{align*}
Q_k^{[k,k]} = Q_0^{0,0,0} \oplus \cdots \oplus Q_k^{k,k,0} 
\end{align*}
which is a subset of $ Q_0^{1,1,0} \oplus \cdots Q_0^{k+1,k+1,0}$ so $u \in \XU{0}{0,k}(K)\oplus \cdots \oplus \XU{s}{k,k}(K)$.

For the cases $s=1$ and $s=2$, we will consider a basis for $\PU{1}_k(K)$ and show that each element, $u$, of the basis is a member of $\XU{1}{r,k}(K)$ for some $r \in \{0 \dots k\}$, which amounts to showing that each $u_{\hat \pii} \circ \phi \in Q^{r+1,r+1,0}_r$.  

From the definition given in \eqref{rk2} it's natural to consider three cases for an element of a basis for $\PU{1}_k(K)$.  First suppose that $\tilde u \in \left(Q_{k+1}^{[k-1,k]} \times 0 \times 0\right)$ with $u_{\tilde 1} = \frac{x^ay^b}{(1+z)^c}$.  From the definition of $Q_{k+1}^{[k-1,k]}$ we see that $0 \le a \le c-2$ and $0 \le b \le c-1$ and so $2 \le c \le k+1$.  Then $\w{1}_\phi \tilde{u} = \left( \frac{x^ay^b}{(1+z)^{c-1}},0,\frac{x^{a+1}y^b}{(1+z)^{c-1}}\right)^t$ and so each $u_{\hat \pii} \in Q_{c-1}^{a+1,b,0} \subset Q_r^{r,r,0}$ where $r=c-1 \in \set{1\dots k}$.  The second case is when $\tilde u \in\left( 0 \times Q_{k+1}^{[k,k-1]} \times 0\right)$ and the reasoning is identical to the first.  Finally suppose that $\tilde u = \nabla p$ where $p = \frac{x^ay^b}{(1+z)^c} \in Q_k^{[k,k]}$.  When $c=0$, $p=1$ and $\nabla p = 0$.  So we can take $c \ge 1$ and see that each entry of
\begin{align*}
\w{1}_\phi \tilde{u} = \col{a\frac{x^{a-1}y^b}{(1+z)^{c-1}}}{b\frac{x^ay^{b-1}}{(1+z)^{c-1}}}{(a+b-c)\frac{x^ay^b}{(1+z)^{c-1}}}
\end{align*}
is in $Q_r^{r+1,r+1,0}$ for some $r \in \set{0 \dots k}$.

When $u \in \PU{2}_k(K)$, lets start with the case $\tilde u \in \left(0 \times 0 \times  Q_{k+2}^{[k-1,k-1]}\right)$ and write $u_{\tilde 3} = \frac{x^ay^b}{(1+z)^c}$.  Again, it is simple to check that each of the entries in the vector $\w{2}_\phi\tilde u = \left(-\frac{x^{a+1}y^b}{(1+z)^{c-2}},-\frac{x^{a}y^{b+1}}{(1+z)^{c-2}},\frac{x^ay^b}{(1+z)^{c-2}}\right)^t$ is in $Q_r^{r+1,r+1,0}$ for some $r \in \set{0 \dots k}$.  Now suppose that $\tilde u = \nabla \times \tilde v$ where $\tilde v \in \left(Q_{k+1}^{[k-1,k]} \times 0 \times 0\right)$ with $v_{\tilde 1} = \frac{x^ay^b}{(1+z)^c}$.  From the $s=1$ case, we know that $c \ge 2$ and so its straightforward to verify that each of the entries in 
\begin{align*}
\w{2}_\phi \tilde u = (1+z)^2\col{1+z & 0 & -x}{0 &1+z & -y}{0&0&1} \col{0}{\frac{-cx^ay^b}{(1+z)^{c+1}}}{\frac{bx^ay^{b-1}}{(1+z)^c}} = \col{\frac{-bx^{a+1}y^b}{(1+z)^{c-2}}}{\frac{-cx^ay^b}{(1+z)^{c-2}}+ \frac{-b x^ay^b}{(1+z)^{c-2}}}{\frac{bx^ay^{b-1}}{(1+z)^{c-2}}}
\end{align*}
are in $Q_r^{r+1,r+1,0}$ for some $r \in \set{0 \dots k}$.  The argument for $\tilde u = \nabla \times \tilde v$ with $\tilde v \in \left(0 \times Q_{k+1}^{[k,k-1]} \times 0\right)$ is the same.

Finally,  $u \in \PU{3}_k(K)$ means that $\tilde u \in Q_{k+3}^{[k-1,k-1]}$.  The weight $\w{3}_\phi = \frac{1}{(1+z)^4}$ so  $\hat u \circ \phi = \frac{1}{(1+z)^4} \tilde u \in Q_{k-1}^{[k-1,k-1]}$ and the reasoning is the same as the 0-form case.  
\end{proof}
\end{appendices}

\bibliographystyle{plain}  
\bibliography{../../latex/bibdesk}
\end{document}